\theoremstyle{plain}
\newtheorem{THEOREM}{Theorem}[section]
\newtheorem{LEMMA}[THEOREM]{Lemma}
\theoremstyle{definition}
\theoremstyle{remark}
\newcommand{\N}{\ensuremath{\mathbb{N}}}   
\newcommand{\R}{\ensuremath{\mathbb{R}}}   
\newcommand{\C}{\ensuremath{\mathbb{C}}}   
\def \a {\alpha}
\def \b {\beta}
\def \d {\delta}
\def \g {\gamma}
\def \f {\varphi}
\def \k {\kappa}
\def \l {\lambda}
\def \s {\sigma}
\def \t {\tau}
\def \O {\Omega}
\def \cL {\mathcal{L}}
\def \cR {\mathcal{R}}
\def \cS {\mathcal{S}}
\def \cU {\mathcal{U}}
\def\cprime{$'$}
\def \< {\langle}
\def \> {\rangle}
\def \p {\partial}
\DeclareMathOperator{\im}{Im} %
\DeclareMathOperator{\tr}{Tr} %
\def \dd  {\mathrm{d}}
\def \dx  {\, \mathrm{d}x}
\def \dz  {\, \mathrm{d}z}
\title{$H^s$ Bounds for the Derivative Nonlinear Schr\"odinger Equation}
\author{Hajer Bahouri}
\address[H. Bahouri]{CNRS \& Sorbonne Universit\'e, Laboratoire Jacques-Louis Lions (LJLL) UMR 7598, Place Jussieu, 75005 Paris, France}
\email{hajer.bahouri@ljll.math.upmc.fr}
\author{Trevor M. Leslie}
\address[T.M. Leslie]{University of Southern California, 3620 S. Vermont Ave., Los Angeles, CA 90089}
\email{lesliet@usc.edu}
\author{Galina Perelman} 
\address[G. Perelman]{Laboratoire D'Analyse et de Math\'ematiques Appliqu\'ees UMR 8050, Universit\'e Paris-Est Cr\'eteil, 61, Avenue Du G\'en\'eral De Gaulle, 94010 Cr\'eteil Cedex, France}
\email{galina.perelman@u-pec.fr}
\date{\today}
\thanks{This material is based upon work supported by the National Science Foundation 	under Grant No. DMS-1928930 while the authors participated in a program hosted by the Mathematical Sciences Research Institute in Berkeley, California, during the Spring 2021 semester.}
\begin{document}
	
	\maketitle	
	
\begin{abstract}
We study the derivative nonlinear Schr\"odinger equation on the real line and obtain global-in-time bounds on high order Sobolev norms.
\end{abstract}

\section{Introduction}

We consider the Cauchy problem for the derivative nonlinear Schr\"odinger equation (DNLS) on the real line~$\R$:
\begin{equation}
\label{e:DNLS}
\arraycolsep=1.4pt\def\arraystretch{1.5}
\left\{
\begin{array}{rcl}
i\p_t u + \p_{x}^2 u & = & - i \p_x(|u|^2 u), \\
u\big|_{t=0} & = & u_0\in H^s(\R), \; s\ge \frac12.
\end{array}
\right.
\end{equation}
We remark right away that the DNLS is $L^2$ critical, as it is invariant under the scaling 
\begin{equation}
\label{e:scaling}
u(t,x)\mapsto u_\mu(t,x):=\sqrt{\mu} u(\mu^2 t, \mu x),
\qquad \mu>0.
\end{equation}
The DNLS equation was introduced by Mio-Ogino-Minami-Takeda and Mj\o lhus~\cite{MioOginoMinamiTakeda1976, Mjolhus1976} as a model for studying magnetohydrodynamics, and it has received a great deal of attention from the mathematics community after being shown to be completely integrable by Kaup-Newell~\cite{KaupNewell1978}.  The infinitely many conserved quantities admitted by the DNLS equation play an important role in the wellposedness theory.  The first three---the mass, momentum, and energy---are as follows. 
\begin{align}
\label{e:mass}
M(u) & := \int_\R |u|^2 \dx, \\
P(u) & := \im \int_\R \overline{u} u_x \dx + \frac12 \int_\R |u|^4 \dx, \\
\label{e:E}
E(u) & := \int_\R \big( |u_x|^2 - \frac32 \im(|u|^2 u\overline{u}_x) + \frac12 |u|^6 \big) \dx.
\end{align}

Before stating our main result, let us give a very brief review of what is known about the wellposedness of the DNLS equation. More detailed overviews can be found, for example, in the introductions of~\cite{BahouriPerelman2020} and~\cite{KillipNtekoumeVisan2021}.  Local wellposedness in~$H^s(\R)$ for~$s\ge \frac12$ was proven by Takaoka~\cite{Takaoka1999}, improving earlier work~\cite{Ozawa1996} by Ozawa.  On the other hand, for~$s<\frac12$, the uniform continuity of the data-to-solution map fails in~$H^s(\R)$~\cite{BiagioniLinares2001, Takaoka2001}.  One can, however, close the~$\frac12$-derivative gap between the~$H^\frac12$ threshold and the critical space~$L^2(\R)$ by working in more general Fourier-Lebesgue spaces, c.f. Gr\"unrock~\cite{Grunrock2005} and references therein.  

A line of results, due to Hayashi-Ozawa~\cite{HayashiOzawa1992}, Colliander-Keel-Staffilani-Takaoka-Tao~\cite{CollianderKeelStaffilaniTakaokaTao2002}, Wu~\cite{WuDNLS2015}, and Guo-Wu~\cite{GuoWu2017}, establishes global well-posedness of the DNLS equation in~$H^s(\R)$ for~$s\ge \frac12$, for initial data having mass less than~$4\pi$.  Another line (Pelinovsky-Saalmann-Shimabukuro~\cite{PelinovskySaalmannShimabukuro2017}, Pelinovsky-Shimabukuro~\cite{PelinovskyShimabukuro2018}, and Jenkins-Liu-Perry-Sulem~\cite{JenkinsLiuPerrySulem2018, JenkinsLiuPerrySulem2020APDE, JenkinsLiuPerrySulem2020QJPAM}) uses inverse scattering techniques to establish global wellposedness under stronger regularity and decay assumptions on the initial data, but without a smallness requirement on the mass.  

The first and third authors proved in~\cite{BahouriPerelman2020} that the DNLS equation is globally well-posed in~$H^{s}(\R)$ for~$s\ge \frac12$ and that solutions generated from~$H^{\frac12}$ initial data remain bounded in~$H^{\frac12}(\R)$ for all time.  There have also been some recent works below the aforementioned~$s=\frac12$ threshold of uniform~$H^s$ continuity with respect to initial data~\cite{BiagioniLinares2001, Takaoka2001}.  Klaus-Schippa~\cite{KlausSchippa2020} gave $H^s$ a priori estimates for $0<s<\frac12$ in the case of small mass, Killip-Ntekoume-Vi\c{s}an~\cite{KillipNtekoumeVisan2021} improved the small mass assumption to $4\pi$ and furthermore proved a global wellposedness result in~$H^s(\R)$,~$\frac16\le s<\frac12$, for initial data with mass less than~$4\pi$. Very recently, Harrop-Griffiths, Killip, and Vi\c{s}an~\cite{HarropGriffithsKillipVisan2021} have removed the small mass assumption both from their~$H^s$ a~priori bounds,~$0<s<\frac12$, as well as from their global wellposedness result in~$H^s(\R)$ with~$\frac16\le s<\frac12$.

In this paper, we are concerned with the global-in-time boundedness of solutions to the DNLS equation in~$H^s$ spaces.  We prove that a uniform-in-time bound in~$H^s(\R)$ holds for all~$s \ge  \frac12$.
\begin{THEOREM}
	\label{t:Hs}
	Suppose~$u$ is a solution to the DNLS equation with initial data~$u_0\in H^s(\R)$,~$s \ge \frac12$. There exists a finite positive constant~$C = C(s, \|u_0\|_{H^s(\R)})$, such that 
	\[
	\sup_{t\in \R} \|u(t)\|_{H^s(\R)} \le C(s, \|u_0\|_{H^s(\R)}).
	\]	
\end{THEOREM}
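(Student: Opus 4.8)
The plan is to bootstrap up from the global $H^{1/2}$ bound of~\cite{BahouriPerelman2020} using the higher conserved quantities of the Kaup--Newell hierarchy; the crucial point is that control of $\|u(t)\|_{H^{1/2}(\R)}$ makes those quantities coercive \emph{without} any smallness assumption on the mass. By Takaoka's local theory~\cite{Takaoka1999} and the global wellposedness proven in~\cite{BahouriPerelman2020}, $u\in C(\R;H^s(\R))$; and by~\cite{BahouriPerelman2020}, $C_{1/2}:=\sup_{t\in\R}\|u(t)\|_{H^{1/2}(\R)}<\infty$. Since $H^s(\R)$ lies between $H^{\lfloor s\rfloor}(\R)$ and $H^{\lceil s\rceil}(\R)$ by interpolation (and between $L^2$ and $H^{1/2}$, or $H^{1/2}$ and $H^1$, when $s<1$), it suffices to bound $\sup_t\|u(t)\|_{H^n(\R)}$ for each integer $n\ge1$.

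\emph{Step 1 (the hierarchy).} Complete integrability~\cite{KaupNewell1978} furnishes, for each $n\ge0$, a real conserved functional $\mathcal H_n$ which, after normalizing the coefficient of its top-order term to $1$, has the form
\[
\mathcal H_n(u)=\|u\|_{\dot{H}^n(\R)}^2+\mathcal R_n(u),\qquad \mathcal H_0=M,\quad \mathcal H_1=E,
\]
where $\mathcal R_n$ is a finite linear combination of integrals $\int_\R\prod_{i=1}^{j}\partial_x^{a_i}u^{\sharp_i}\dx$ with $u^{\sharp_i}\in\{u,\overline u\}$, $j\ge4$ even, and $\sum_{i=1}^{j}a_i=2n+1-\tfrac j2$. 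That last relation is simply the statement that each such monomial is homogeneous of scaling degree $2n$ under~\eqref{e:scaling}, exactly as $\|u\|_{\dot{H}^n}^2$ is. Establishing the $\mathcal H_n$ precisely in this local, scaling-homogeneous, polynomial form (with nonzero, hence normalizable, leading coefficient) is the single genuinely integrable ingredient, and I expect it to be the main obstacle to a clean write-up: it is classical in spirit (Kaup--Newell) but must be made precise enough to feed the estimates below. It may be more convenient to work in the Kaup--Newell gauge variable $v=u\exp\!\bigl(-i\int_{-\infty}^x|u(t,y)|^2\dy\bigr)$, whose flow has a polynomial nonlinearity and whose hierarchy is somewhat more transparent; the gauge map and its inverse are bounded on $H^\sigma(\R)$, $\sigma\in[\tfrac12,n]$, with norms controlled by the mass and by $\|u\|_{H^{\sigma-1}(\R)}$, so one can transfer bounds back to $u$ without loss.

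\emph{Step 2 (coercivity modulo lower order).} Write $\sigma_n:=\max\{n-1,\tfrac12\}$; the aim is
\[
|\mathcal R_n(u)|\le \eta\,\|u\|_{\dot{H}^n(\R)}^2+C\bigl(\eta,n,\|u\|_{H^{\sigma_n}(\R)}\bigr)\qquad(n\ge1,\ \eta>0).
\]
First comes an elementary integration-by-parts lemma: any integral $\int_\R\prod_i\partial_x^{a_i}u^{\sharp_i}\dx$ with $\sum_i a_i=D$ equals a finite linear combination of integrals of the same degrees in which no factor carries more than $\lceil D/2\rceil$ derivatives. Applying it to each monomial of $\mathcal R_n$ (where $D=2n+1-\tfrac j2\le2n-1$, so at most $n$ derivatives land on any single factor), one estimates by H\"older's inequality and the Gagliardo--Nirenberg inequality, interpolating each factor between $\|u\|_{\dot{H}^n}$ and $\|u\|_{H^{\sigma_n}}$. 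A count of scaling weights shows that interpolating only between $L^2$ and $\dot{H}^n$ would give exponent exactly $2$ on $\|u\|_{\dot{H}^n}$ — the signature of the $L^2$-criticality of DNLS — but that, because $j\ge 4$, enough factors can be placed instead in the strictly stronger norm $H^{\sigma_n}$ to lower that exponent below $2$, after which Young's inequality closes the estimate. This is precisely where~\cite{BahouriPerelman2020} is indispensable: at the first level $n=1$ one has $\sigma_1=\tfrac12$, and mass conservation alone cannot control the sextic term $\tfrac12\int_\R|u|^6\dx$ of $E$.

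\emph{Step 3 (induction on $n$).} The functionals $\mathcal H_n$ are well-defined and conserved along $H^n(\R)$ solutions by a density argument from the Schwartz-class theory. For $n=1$, since $\mathcal H_1=E$ is conserved, Step 2 with $\eta=\tfrac12$ gives $\|u(t)\|_{\dot{H}^1(\R)}^2\le 2E(u_0)+2C(\tfrac12,1,C_{1/2})$ for all $t$, whence $\sup_t\|u(t)\|_{H^1(\R)}<\infty$ by mass conservation. Inductively, if $\sup_t\|u(t)\|_{H^{n-1}(\R)}<\infty$, then applying Step 2 to the conserved $\mathcal H_n$ (now with $\sigma_n=n-1$) gives $\sup_t\|u(t)\|_{H^n(\R)}<\infty$. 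By the reduction in the first paragraph this yields Theorem~\ref{t:Hs}, with a constant depending only on $s$ and $\|u_0\|_{H^s(\R)}$ — through $C_{1/2}$ and through $\mathcal H_m(u_0)$ for the finitely many $m\le\lceil s\rceil$.
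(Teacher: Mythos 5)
Your integer-$n$ induction is essentially the paper's own argument for $s\in\N^*$ (conserved energies of the hierarchy that equal $\|u\|_{\dot H^n}^2$ plus terms controllable by $\|u\|_{H^{\max\{n-1,1/2\}}}$, with the base of the induction supplied by the known global $H^{1/2}$ bound), and, granting a precise derivation of the polynomial conservation laws, that part can be made to work. The genuine gap is the very first reduction: you claim that bounding $\sup_t\|u(t)\|_{H^n}$ for every integer $n$ suffices, "since $H^s$ lies between $H^{\lfloor s\rfloor}$ and $H^{\lceil s\rceil}$ by interpolation." Interpolation bounds $\|u(t)\|_{H^s}$ by $\|u(t)\|_{H^{\lfloor s\rfloor}}^{1-\theta}\|u(t)\|_{H^{\lceil s\rceil}}^{\theta}$, i.e. it requires control of the \emph{higher} integer norm. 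For non-integer $s$ the data are only in $H^s(\R)$, so $u(t)$ need not belong to $H^{\lceil s\rceil}(\R)$ at all, and no such bound exists; even if you regularize $u_0$ and argue by density, the constant produced this way depends on $\|u_0\|_{H^{\lceil s\rceil}}$ (equivalently on $\mathcal H_m(u_0)$ for $m=\lceil s\rceil$, as your final sentence admits), which cannot be dominated by $\|u_0\|_{H^s}$. In particular for $\tfrac12<s<1$ your scheme would need a global $H^1$ bound for $H^s$ data, which is precisely what is unavailable.

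This non-integer case is the main content of the theorem and of the paper: it is handled there not by interpolation between integer levels but by a genuinely fractional conserved quantity, namely $\int_R^\infty \rho^{2s-1}\varphi_{[s]}(u,\rho)\,\mathrm{d}\rho$ built from the remainder of the expansion of $\ln a_u(\sqrt{i\rho})$, whose quadratic part is comparable to $\|u\|_{\dot H^s}^2$ (Lemma~\ref{l:Hscomparison}) and whose nonquadratic part is controlled, uniformly for $\rho\ge R_0$, by lower norms plus a subcritical power of $\|u\|_{H^s}$ (Lemma~\ref{l:phiLphiL0}, resting on the remainder estimates of Lemma~\ref{l:trdecomp}, in particular the $\alpha$-dependent bound~\eqref{e:tau2bound}); for $\tfrac12\le s<1$ the needed lower-norm input is the $H^\beta$ bound with $\beta<\tfrac12$ from~\cite{HarropGriffithsKillipVisan2021}, not an $H^1$ bound. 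Your proposal contains no substitute for this fractional mechanism, so as written it proves the theorem only for $s\in\N^*$ (and, by interpolation against the $H^{1/2}$ bound, for constants depending on $\|u_0\|_{H^1}$ when $\tfrac12<s<1$, which is weaker than the statement).
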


The main idea is to build off of the~$H^{s}$ bounds with $0<s<\frac12$ from~\cite{HarropGriffithsKillipVisan2021} and to take advantage of the complete integrability of the equation.  As in \cite{BahouriPerelman2020}, \cite{HarropGriffithsKillipVisan2021}, the present work relies heavily on the conservation of the transmission coefficient for the spectral problem associated to the DNLS equation.  This property has already been used in many other works; of particular relevance to us are the papers of G\'erard~\cite{GerardIntegrales}, Killip-Vi\c{s}an-Zhang~\cite{KillipVisanZhang2018}, Killip-Vi\c{s}an~\cite{KillipVisan2019}, and Koch-Tataru~\cite{KochTataru2018}, on the cubic NLS and KdV equations.  

Note that by continuity of the flow, and the preservation of the Schwartz class under the flow, we lose nothing by restricting attention to the Schwartz class; we will thus work exclusively with Schwartz functions for the remainder of the manuscript.  We will also suppress the time dependence when it does not play a role.  

One can easily prove Theorem \ref{t:Hs} in the special case $s = 1$, using the conserved quantity $E(u)$.  Indeed, simply rearranging \eqref{e:E} yields
\[
\|u\|_{\dot{H}^1(\R)}^2  = E(u) - \frac12 \|u\|_{L^6(\R)}^6  + \frac32 \int_\R \im(|u|^2 u \overline{u}_x)\dx.
\]
Clearly, the last term can be bounded above in absolute value by $\frac12 \|u\|_{\dot{H}^1(\R)}^2 + C\|u\|_{L^6(\R)}^6$, whence the desired bound follows by Sobolev embedding.

The higher-order Sobolev norms of integer order can be dealt with similarly, once we have a formula for the corresponding higher-order conserved quantities.  We will show that for any nonnegative integer~$\ell$, one of the conserved quantities is equal to a constant multiple of~$\|u\|_{\dot{H}^\ell(\R)}^2$, plus terms which are of lower order.  For noninteger~$s$, we will use a sort of `generalized energy', comparable to~$\|u\|_{\dot{H}^s(\R)}^2$, that will be defined in terms of the transmission coefficient of the DNLS spectral problem.  We sketch presently the background necessary to define these objects precisely; for more details, see, for example,~\cite{AblowitzSegurbook, JenkinsLiuPerrySulem2018, JenkinsLiuPerrySulem2020APDE, JenkinsLiuPerrySulem2020QJPAM, KaupNewell1978,LeeDNLS1989, PelinovskyShimabukuro2018, TsuchidaWadati1999}.

The DNLS equation can be obtained as a compatibility condition of the following system~\cite{KaupNewell1978}:
\begin{equation}
\label{e:KaupNewell}
\begin{split}
\p_x \psi & = \cU(\l) \psi,\\
\p_t \psi & = \Upsilon(\l) \psi.
\end{split}
\end{equation}
Here $\l\in \C$ is a spectral parameter, independent of~$t$ and~$x$, and~$\psi = \psi(t,x,\l)$ is~$\C^2$-valued.  The operators~$\cU(\l)$ and~$\Upsilon(\l)$ are defined by 
\begin{equation}
\begin{split}
\cU(\l) & = -i\s_3 (\l^2 + i\l U), \\
\Upsilon(\l) & = -i(2\l^4 - \l^2 |u|^2)\s_3 + \begin{pmatrix}
0 & 2\l^3 u - \l |u|^2 u + i\l u_x \\
-2\l^3 \overline{u} + \l |u|^2 u + i\l \overline{u}_x & 0 
\end{pmatrix},
\end{split}
\end{equation}
where 
\[
\s_3 = \begin{pmatrix}
1 & 0 \\
0 & -1
\end{pmatrix}, 
\qquad U = \begin{pmatrix}
0 & u \\
\overline{u} & 0 \end{pmatrix}.
\]
To be more specific about the sense in which the DNLS is a compatibility condition, we note that~$u$ satisfies the DNLS equation if and only if~$\cU$ and~$\Upsilon$ satisfy the so-called `zero-curvature' representation 
\[
\frac{\p \cU}{\p t} - \frac{\p \Upsilon}{\p x} + [\cU, \Upsilon] = 0.
\]
The first equation of~\eqref{e:KaupNewell} can be written in the form 
\begin{equation}
\label{e:scattering}
L_u(\l) \psi := (i\s_3 \p_x - \l^2 - i\l U)\psi = 0, 
\end{equation}
which defines the scattering transform associated to the DNLS.  Let us denote 
\[
\O_+:=\{\l\in \C:\im \l^2>0\}.
\]  
Then given~$u\in \cS(\R)$ and~$\l\in \overline{\O}_+$, there are unique solutions to~\eqref{e:scattering} (the ``J\"ost solutions'') exhibiting the following behavior at~$\pm \infty$: 
\begin{equation}
\begin{split}
\psi_1^-(x,\l) = e^{-i\l^2 x}  \left[ \begin{pmatrix}
1 \\ 0 
\end{pmatrix}
+ o(1) \right], & \qquad \text{ as } x\to -\infty, \\
\psi_2^+(x,\l) = e^{i\l^2 x}  \left[ \begin{pmatrix}
0 \\ 1 
\end{pmatrix}
+ o(1) \right], & \qquad \text{ as } x\to +\infty.
\end{split}
\end{equation}
Finally, we denote by~$a_u(\l)$ the Wronskian of the J\"ost solutions defined above:\footnote{The \textit{transmission coefficient} mentioned earlier is the inverse of $a_u(\l)$.}
\begin{equation}
a_u(\l) = \det(\psi_1^-(x,\l), \psi_2^+(x,\l)).
\end{equation}

Using the second equation in~\eqref{e:KaupNewell}, it can be shown that~$a_u(\l)$ is time-independent if~$u$ is a solution of~\eqref{e:DNLS}.  Furthermore, $a_u$ is a holomorphic function of $\l$ in $\O_+$, and one may determine the behavior of~$a_u$ at infinity by transforming~\eqref{e:scattering} into a Zakharov-Shabat spectral problem, linear with respect to the spectral parameter, c.f. \cite{KaupNewell1978}, \cite{PelinovskySaalmannShimabukuro2017}.  The equivalence between the two problems allows us to write 
\begin{equation}
\label{e:auatinfinity}
\lim_{|\l|\to \infty,\,\l\in \overline{\O}_+} a_u(\l) = e^{-\frac{i}{2} \|u\|_{L^2(\R)}^2}.
\end{equation}
For fixed~$u$, we can thus define the logarithm so that 
\begin{equation}
\label{e:lndef}
\lim_{|\l|\to \infty, \l \in \overline{\O}_+} \ln a_u(\l) = -\frac{i}{2}\|u\|_{L^2(\R)}^2.
\end{equation}
Moreover,~$\ln a_u(\l)$ admits an asymptotic expansion of the following form:
\begin{equation}
\label{e:energyexpansion}
\ln a_u(\l) = \sum_{j=0}^\infty \frac{E_j(u)}{\l^{2j}}
\qquad 
\text{ as } |\l|\to \infty,\; \l\in \overline{\O}_+.
\end{equation}
Since~$a_u(\l)$ is time-independent, the quantities~$E_j(u)$ are conservation laws. They are all polynomial in~$u$, $\overline{u}$, and their derivatives.  Furthermore, the~$E_j(u)$'s inherit scaling properties from $a_u(\l)$.  That is, for~$\mu>0$, the fact that $a_{u_\mu}(\l) = a_u(\frac{\l}{\sqrt{\mu}})$ implies that $E_j(u_\mu) = \mu^j E_j(u)$, for each $j\in \N$.  The first several of the $E_j(u)$'s are (up to multiplicative constants) the conserved quantities~\eqref{e:mass}--\eqref{e:E} mentioned earlier: 
\[
E_0(u) = -\frac{i}{2}\|u\|_{L^2(\R)}^2 = -\frac{i}{2} M(u), 
\qquad 
E_1(u) = \frac{i}{4} P(u), 
\qquad E_2(u) = -\frac{i}{8}E(u).
\]
For each~$\ell\in \N^*$, the quantity~$E_{2\ell}(u)$ can be used to control~$\|u\|_{\dot{H}^\ell(\R)}^2$.  Let us define, for $\rho$ positive sufficiently large and~$L\in \N$, 
\begin{equation}
\f_L(u,\rho) = \im\left[ \ln a_u(\sqrt{i\rho}) - \sum_{j=0}^{2L+1} \frac{E_j(u)}{(i\rho)^j} \right] .
\end{equation}
If $u$ is a solution of the DNLS equation, then $\f_L(u,\rho)$ is time-independent, being a sum of time-independent quantities.

In order to establish bounds on the~$H^s$ norm of~$u$, for~$s\ge \frac12$, we will show that~$\int_{R}^\infty \rho^{2s-1} \f_{[s]}(u,\rho) \dd\rho$ with $R>0$ large enough controls the~$\dot{H}^s$ seminorm of~$u$, in a sense to be made precise later.  Here and below, we use~$[s]$ to denote the integer part of a real number~$s$.

Our proof of Theorem~\ref{t:Hs} relies on a good understanding of the structure of the remainder associated to the expansion~\eqref{e:energyexpansion}.  Note that when~$\l^2 = i\rho$, the imaginary part of this remainder (which is what we really use) is simply~$\f_L(u, \rho)$.  In Section~\ref{s:mainpfs}, we will introduce a determinant characterization of~$a_u(\l)$; we use this characterization to formulate a technical statement (Lemma~\ref{l:trdecomp} below) on the size of the remainder.  Assuming the result of Lemma~\ref{l:trdecomp}, we will prove Theorem~\ref{t:Hs} at the end of Section~\ref{s:mainpfs}.  Then, in Section~\ref{s:decompproof}, we will prove our technical Lemma, completing the circle of ideas.  Most of the work is contained in this last section.

Before moving on, let us establish a few notational conventions that we wish to add to the ones introduced above.  First of all, we use the following normalization for the Fourier transform: 
\[
\widehat{f}(\zeta) = \frac{1}{\sqrt{2\pi}} \int_\R e^{ix\zeta} f(x)\dx.
\]
The symbol~$\N$ will denote the nonnegative integers, and~$\N^* = \N\backslash \{0\}$.  We will use~$\|\cdot\|_2$ to denote the Hilbert-Schmidt norm, and~$\|\cdot\|$ will denote the operator norm on~$L^2(\R)$. And we will use the following shorthand for derivatives:
\[
D = -i\p_x, \qquad \cL_0 = i\s_3 \p_x.
\]
Whenever $2\le p<\infty$, we will use $s^*(p)$ to denote the Sobolev exponent $s^*(p) = \frac12 - \frac1p$ such that the embedding $H^{s^*(p)}(\R)\hookrightarrow L^p(\R)$ holds.  

Finally, we set notation for the following subset of $\O_+$:
\[
\Gamma_\d = \{\l\in \O_+:\d<\arg(\l^2)<\pi - \d\}.
\]
This notation will be useful in some of the intermediate steps we use to prove Theorem~\ref{t:Hs}, as our estimates will frequently depend on~$\frac{|\l|^2}{\im \l^2}$ (which is~$\le C(\d)$ on~$\Gamma_\d$).  However, the value of~$\d>0$ will be inconsequential for our final steps, where we will take~$\l^2$ to be pure imaginary.  Therefore, for simplicity of presentation, we will \textit{fix}~$\d>0$ once and for all and suppress dependence on~$\d$ in all bounds below.

\section{Proof of the Main Result}

\label{s:mainpfs}

\subsection{The Determinant Characterization of $a_u(\l)$}

\label{ss:detcharau}

An important property of~$a_u(\l)$ is the fact that it can be realized as a perturbation determinant:
\begin{equation}
\label{e:audet}
a_u(\l)^2 = \det(I - T_u(\l)^2), 
\end{equation}
where 
\[
T_u(\l) = i\l (\cL_0 - \l^2)^{-1}U, \qquad \l\in \O_+.
\]
The operator~$T_u(\l)$ is Hilbert-Schmidt, with 
\begin{equation}
\|T_u(\l)\|_2^2 = \frac{|\l|^2}{\im(\l^2)} \|u\|_{L^2(\R)}^2.
\end{equation}
As a consequence of~\eqref{e:audet}, we may write\footnote{This series expansion of $\ln a_u(\l)$ is consistent with the definition \eqref{e:lndef}.}
\begin{equation}
\label{e:trexpansion}
\ln a_u(\l) = -\sum_{k=1}^\infty \frac{\tr (T_u(\l)^{2k})}{2k}, \qquad \text{ if } \|T_u(\l)\|<1.
\end{equation}
This series will converge whenever~$\l\in \Gamma_\d$ has large enough modulus; indeed, using the explicit kernel of~$(\cL_0 - \l^2)^{-1}$, it can easily be shown that for any~$p> 2$, we have
\begin{equation}
\label{e:Tuop}
\|T_u(\l)\| \lesssim \frac{|\l| \|u\|_{L^p(\R)}}{\im (\l^2)^{1-\frac1p}},
\qquad \l\in \O_+, \; u\in L^p(\R). 
\end{equation}
In particular, we can find~$R_0 = R_0(\|u\|_{H^{\frac13}(\R)})$ such that~$\|T_u(\l)\|\le \frac12$ for all~$\l\in \Gamma_\d$ satisfying~$|\l|^2\ge R_0$. We will fix the notation~$R_0$ for use below.  

As we shall see later, each term of the series~\eqref{e:trexpansion} can be expanded in powers of~$\l^{-2}$:
\begin{equation}
\label{e:Trmu}
-\frac{\tr (T_u(\l)^{2k})}{2k} = \sum_{j=k-1}^\infty \frac{\mu_{j,k}(u)}{\l^{2j}}.
\end{equation}
According to \eqref{e:energyexpansion} and \eqref{e:trexpansion}, the~$E_j(u)$'s should then satisfy 
\begin{equation}
\label{e:Ejmu}
E_j(u) = \sum_{k=1}^{j+1} \mu_{j,k}(u).
\end{equation}
We will use the following notation for the remainders after truncation of the expansions~\eqref{e:trexpansion} and~\eqref{e:Trmu}:
\begin{equation}
\ln a_u(\l) = -\sum_{k=1}^{2L+2} \frac{\tr (T_u(\l)^{2k})}{2k} + \t_L^*(u,\l),
\qquad L\in \N;
\end{equation}
\begin{equation}
\label{e:trdecomp}
-\frac{\tr(T_u(\l)^{2k})}{2k} = \sum_{j=k-1}^{2L+1} \frac{\mu_{j,k}(u)}{\l^{2j}} + \t^{k}_{L}(u,\l),
\qquad k\in \{1, \ldots, 2L+2\}, \; L\in \N. 
\end{equation}	
The primary difficulty of the proof of Theorem~\ref{t:Hs}---and indeed, the subject of Lemma~\ref{l:trdecomp}---is the understanding of the size and structure of the remainder terms~$\t^k_L(u,\l)$, and to a lesser extent, the~$\mu_{j,k}(u)$'s. On the other hand, for~$\l\in \Gamma_\d$ with large enough modulus, it is easy to bound the~$\t_L^*(u,\l)$'s.  For example, if~$\|T_u(\l)\|\le \frac12$, then 
\begin{equation}
\label{e:tau*}
\begin{split}
|\t_L^*(u,\l)| & = \bigg| \ln a_u(\l) + \sum_{k=1}^{2L+2} \frac{\tr T_u^{2k}(\l)}{2k} \bigg| \le \sum_{k=2L+3}^\infty \| T_u(\l)\|^{2k-2} \|T_u(\l)\|_2^2 \\
& \lesssim \|T_u(\l)\|^{4L+4} \|T_u(\l)\|_2^2 \lesssim \frac{\|u\|_{H^{s^*(p)}(\R)}^{4L+4} \|u\|_{L^2(\R)}^2}{|\l|^{(4L+4)(1 - \frac2p)}}, \quad 2< p<\infty, \; s^*(p) = \frac12 - \frac1p.
\end{split}
\end{equation}

The following table summarizes the various relationships among the quantities introduced above and will be helpful to keep track of the numerology.  More precise information about the~$\mu_{j,k}(u)$'s and~$\t_L^k(u,\l)$'s will be provided below.

\begin{tiny}
	\[	
	\renewcommand*{\arraystretch}{2.5}
	\hspace*{-7mm}\begin{array}{rcccccccccccccc}
	& & -\dfrac{\tr T_u^2(\l)}{2} & & -\dfrac{\tr T_u^4(\l)}{4} & & -\dfrac{\tr T_u^6(\l)}{6} & & \cdots & & -\dfrac{\tr T_u^{4L+2}(\l)}{4L+2} & & -\dfrac{\tr T_u^{4L+4}(\l)}{4L+4} &  &  \\ \cline{3-14}
	\ln a_u(\l) = & & \mu_{0,1}(u) &&& &&&&&&&&\vline & E_0(u)\\ [0.1cm]
	& +&  \dfrac{\mu_{1,1}(u)}{\l^2} &+&  \dfrac{\mu_{1,2}(u)}{\l^2}  &&&&&&&&&\vline & \dfrac{E_1(u)}{\l^2} \\
	& +& \dfrac{\mu_{2,1}(u)}{\l^4} &+&  \dfrac{\mu_{2,2}(u)}{\l^4} &+& \dfrac{\mu_{2,3}(u)}{\l^4} &&&&&&&\vline & \dfrac{E_2(u)}{\l^4}  \\
	& & \vdots && \vdots && \vdots &  & \ddots &&&&&\vline & \vdots \\
	& +& \dfrac{\mu_{2L,1}(u)}{\l^{4L}} &+&  \dfrac{\mu_{2L,2}(u)}{\l^{4L}} &+& \dfrac{\mu_{2L,3}(u)}{\l^{4L}} & + & \cdots & + & \dfrac{\mu_{2L,2L+1}(u)}{\l^{4L}} &&&\vline & \dfrac{E_{2L}(u)}{\l^{4L}}  \\ 
	& +& \dfrac{\mu_{2L+1,1}(u)}{\l^{4L+2}} &+&  \dfrac{\mu_{2L+1,2}(u)}{\l^{4L+2}} &+& \dfrac{\mu_{2L+1,3}(u)}{\l^{4L+2}} & + & \cdots & + & \dfrac{\mu_{2L+1,2L+1}(u)}{\l^{4L+2}}  & + & \dfrac{\mu_{2L+1,2L+2}(u)}{\l^{4L+2}} &\vline & \dfrac{E_{2L+1}(u)}{\l^{4L+2}}  \\[.3cm] \cline{14-15} 
	& + & \t_{L}^1(u,\l) & + & \t_{L}^2(u,\l) & + & \t_{L}^3(u,\l) & + & \cdots & +&  \t_{L}^{2L+1}(u,\l) & +&  \t_{L}^{2L+2}(u,\l) & + & \t_L^*(u,\l)
	\end{array}	
	\]	
\end{tiny}

\subsection{Structure of the Traces}

\label{ss:Lemma}

In this section, we record all the information about the traces that we need in order to prove our main result.  We deal first with the easy case of~$\tr T_u(\l)^2$, about which we need more explicit information.  A straightforward computation gives us 
\begin{equation}
\label{e:trtu2}
\tr T_u^2(\l) = 2i\l^2 \int_\R \frac{|\widehat{u}(\zeta)|^2}{\zeta + 2\l^2} \dd\zeta.
\end{equation}
We determine the expansion of~$\tr T_u^2(\l)$ by simply substituting into~\eqref{e:trtu2} the identity
\[
\frac{2\l^2}{\zeta+2\l^2} = \sum_{j=0}^{2L+1} \left( - \frac{\zeta}{2\l^2} \right)^j + \frac{\zeta}{\zeta+2\l^2} \left( \frac{\zeta}{2\l^2} \right)^{2L+1}, 
\qquad L\in \N,
\]
to obtain
\begin{equation}
\label{e:trtu2expanded}
-\frac{\tr T_u^2(\l)}{2} = \sum_{j=0}^{2L+1} \frac{1}{\l^{2j}} \cdot \underbrace{\frac{i}{(-2)^{j+1}} \int_\R \zeta^j |\widehat{u}(\zeta)|^2 \dd\zeta}_{=:\mu_{j,1}(u)} - \underbrace{\frac{i}{4^{L+1} \l^{4L+2}} \int_\R \frac{\zeta^{2L+2}|\widehat{u}(\zeta)|^2}{\zeta + 2\l^2} \dd\zeta}_{=: \t_L^1(u,\l)}, \qquad L\in \N.
\end{equation}

Now we state our main Lemma, which describes the structure of the other~$\mu_{j,k}(u)$'s and~$\t_L^k(u,\l)$'s.

\begin{LEMMA}
	\label{l:trdecomp}
	For any~$k\in \N^*$,~$L\in \N$, the traces~$\tr T_u^{2k}(\l)$ admit the decomposition~\eqref{e:trdecomp}.  The~$\mu_{j,k}(u)$'s and~$\t_L^k(u,\l)$'s satisfy the properties below, where for any~$n\in \N$ we denote~$\s(n) = \max\{n, \frac13\}$.  
	
	\medskip
	\begin{itemize}
		\item Each~$\mu_{j,k}(u)$ is a homogeneous polynomial of degree~$2k$ in~$u$,~$\overline{u}$, and their derivatives; it is homogeneous with respect to the natural scaling. We have
		\begin{equation}
		\label{e:mu2lbd}
		\begin{split}
		|\mu_{2\ell,2}(u)| & \lesssim \|u\|_{H^{\s(\ell-1)}(\R)}^{3}\|u\|_{H^{\ell}(\R)}, \qquad \ell\in \N^*, \\
		|\mu_{2\ell,k}(u)| & \lesssim \|u\|_{H^{\s(\ell-1)}(\R)}^{2k}, 
		\qquad \qquad \quad \;\; \ell\in \N^*,\;k\in \{3,\ldots, 2\ell + 1\},  \\
		|\mu_{2\ell+1,k}(u)| & \lesssim \|u\|_{H^\ell(\R)}^{2k}, \qquad \qquad \qquad \quad\; \ell\in \N^*, \;k\in \{2, \ldots, 2\ell + 2\}.
		\end{split}
		\end{equation}
		\item For~$|\l|^2>R_0$,~$\l\in \Gamma_\d$, we have the following bounds:
		\begin{align}
		\label{e:tau2bound}
		|\t_L^{2}(u,\l)| & \lesssim_{\a} \frac{\|u\|_{H^{\s(L)}(\R)}^{3}\|u\|_{H^{L+\a}(\R)}}{|\l|^{4L+2 + 2\a}},  & L\in \N,\; 0\le \a<1;\\
		\label{e:taubound}
		|\t_L^{k}(u,\l)| & \lesssim \frac{\|u\|_{H^{L}(\R)}^{2k}}{|\l|^{4L+4}}, 
		& L\in \N^*, \; k\in \{3, \ldots, 2L+2\}. 
		\end{align}
	\end{itemize}
\end{LEMMA}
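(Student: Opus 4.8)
\emph{Overview and Step 1 (reduction to a scalar Fourier integral).} The plan is to compute the traces $\tr T_u(\l)^{2k}$ explicitly on the Fourier side, extract their expansion in powers of $\l^{-2}$ by a single residue computation, and read off the bounds \eqref{e:mu2lbd}--\eqref{e:taubound} from the resulting multilinear symbols; since the case $k=1$ is already explicit in \eqref{e:trtu2expanded}, I take $k\ge2$. Because $\cL_0-\l^2=\diag(i\p_x-\l^2,\,-i\p_x-\l^2)$ and $U=\bigl(\begin{smallmatrix}0&M_u\\ M_{\overline u}&0\end{smallmatrix}\bigr)$ is off-diagonal, the block structure gives $\tr T_u^{2k}(\l)=2(i\l)^{2k}\tr\bigl((R_+M_uR_-M_{\overline u})^k\bigr)$, where $R_\pm=(\pm i\p_x-\l^2)^{-1}$ act on the Fourier side as multiplication by $(\pm\zeta-\l^2)^{-1}$ and $M_u$ as convolution by $\widehat u$. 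Writing out this trace and isolating one ``free'' frequency $\zeta_0$, I arrive at
\[
\tr\bigl((R_+M_uR_-M_{\overline u})^k\bigr)=c_k\int_{\R^{2k-1}}\cV(\zeta_1,\ldots,\zeta_{2k-1})\Bigl(\int_\R\prod_{p=1}^{2k}\frac{d\zeta_0}{\e_p\zeta_0+\ell_p-\l^2}\Bigr)\,d\zeta_1\cdots d\zeta_{2k-1},
\]
where $\cV$ is a product of $2k$ factors, each $\widehat u$ or $\overline{\widehat u}$ evaluated at a linear combination of the $\zeta_i$'s, the $\e_p\in\{\pm1\}$ occur with $k$ of each sign, and the $\ell_p$ are affine in $\zeta_1,\ldots,\zeta_{2k-1}$. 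Since the inner integrand decays like $|\zeta_0|^{-2k}$, closing the contour (say in the upper half-plane) reduces the inner integral to the sum of residues at the $k$ poles contributed by the $R_+$-factors.

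\emph{Step 2 (the asymptotic expansion).} Each residue is a product of $k$ ``$\l$-dependent'' factors $(\mp2\l^2+c_i)^{-1}$, $c_i$ affine in the remaining frequencies, and $k-1$ ``$\l$-independent'' factors of the form $(\text{affine})^{-1}$. Expanding every $\l$-dependent factor by the finite geometric series exactly as in \eqref{e:trtu2expanded} and collecting powers of $\l^{-2}$ produces the decomposition \eqref{e:trdecomp}: the coefficient of $\l^{-2j}$ is $\mu_{j,k}(u)$, a $2k$-linear form in $u$ (taken $k$ times) and $\overline u$ (taken $k$ times) whose symbol $\fm_{j,k}$ is a polynomial homogeneous of degree $j-k+1$ (the $\l$-dependent factors supply degree $j$, the $k-1$ denominators remove $k-1$); $\t_L^k(u,\l)$ collects the remaining terms, i.e. those of order $\l^{-2j}$ with $j\ge2L+2$. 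In particular $\mu_{j,k}$ is a homogeneous polynomial of degree $2k$ in $u,\overline u$ and their derivatives carrying exactly $j-k+1$ of them, and the scaling identity $\mu_{j,k}(u_\mu)=\mu^j\mu_{j,k}(u)$ follows at once from $T_{u_\mu}(\l)=\cD_\mu T_u(\l/\sqrt{\mu})\cD_\mu^{-1}$ (with $\cD_\mu$ the $L^2$-unitary dilation), which gives $\tr T_{u_\mu}^{2k}(\l)=\tr T_u^{2k}(\l/\sqrt\mu)$. (The same structure, including that $\fm_{j,k}$ is genuinely polynomial, can alternatively be read off by writing $R_\pm$ via their exponentially localized kernels and applying Watson's lemma to the resulting Laplace-type integral.)

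\emph{Step 3 (the bounds).} The key observation for \eqref{e:mu2lbd}--\eqref{e:taubound} is that on $\{\sum_i\zeta_i=0\}$ the two largest frequencies $\zeta_{(1)},\zeta_{(2)}$ are comparable, so a polynomial symbol homogeneous of degree $d\ge0$ obeys $|\fm|\lesssim\langle\zeta_{(1)}\rangle^{d}\lesssim\langle\zeta_{(1)}\rangle^{a}\langle\zeta_{(2)}\rangle^{d-a}$ for any $0\le a\le d$. For $\mu_{2\ell,2}$ ($d=2\ell-1$) I take $a=\ell$, put the two highest-frequency factors in $L^2$ and the remaining two in $L^\infty$ via $H^{1/2+}\hookrightarrow L^\infty$ when $\ell\ge2$, and in $L^6$ via $H^{1/3}\hookrightarrow L^6$ with H\"older exponents $(2,6,6,6)$ when $\ell=1$ --- which is exactly where $\s(n)=\max\{n,\tfrac13\}$ enters; the other $\mu_{j,k}$ are handled the same way, the hypotheses $k\ge2$ (resp.\ $k\ge3$) and $\ell\ge1$ providing just enough regularity budget. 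For $\t_L^k$ one leaves a single $\l$-dependent factor un-expanded and uses, on $\Gamma_\d$, the elementary bounds $\bigl|\tfrac{c^m}{(2\l^2)^m(\mp2\l^2+c)}\bigr|\lesssim|c|^m|\l|^{-2m-2}$ and $\bigl|\tfrac{c^{\a}}{\mp2\l^2+c}\bigr|\lesssim|c|^{\a}|\l|^{-2+2\a}$ for $0\le\a<1$ (both coming from $|\mp2\l^2+c|\gtrsim\max\{|\l|^2,|c|\}$ on $\Gamma_\d$); choosing the truncation order appropriately converts each retained power of $\l^{-2}$ into a derivative --- or $\a$ fractional derivatives --- on $u$, distributed exactly as before, which yields \eqref{e:taubound} for $k\ge3$ and \eqref{e:tau2bound} for $k=2$.

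\emph{The main obstacle.} The delicate step is the bookkeeping in Step 2: the ``$\l$-independent'' factors $(\text{affine})^{-1}$ produced by individual residues are \emph{singular}, and only the sum over all $k$ residues is regular. I would need to show these spurious poles cancel --- as one checks by hand for $k=2$, where the two residue contributions differ precisely by a factor vanishing on the pole set --- so that each $\fm_{j,k}$ really is a polynomial and the size estimate $|\fm_{j,k}|\lesssim\langle\zeta_{(1)}\rangle^{j-k+1}$ used throughout Step 3 holds uniformly in $j$ and $k$. Establishing this cancellation in the required generality --- or, equivalently, organizing the Laplace-asymptotics computation so that the polynomial structure and its size are manifest --- is where the bulk of the work lies.
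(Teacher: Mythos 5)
Your reduction to a Fourier-side multilinear integral and the counting in Steps 2--3 are consistent with the numerology of \eqref{e:mu2lbd}--\eqref{e:taubound}, but the proposal has a genuine gap, and it is the one you yourself flag: you never establish that the coefficients of $\l^{-2j}$ (and, just as importantly, the remainders) are multilinear forms whose symbols are polynomials of degree $j-k+1$ with controlled size. The residue computation produces, for each of the $k$ poles, a product of $k-1$ singular factors $(\mathrm{affine})^{-1}$ in the remaining frequencies; only the sum over all residues (and over the orderings of the $u$ and $\overline u$ factors) is regular, and this cancellation must be propagated not only through every $\mu_{j,k}$ but through every remainder term, since the un-expanded $\l$-dependent factor sits inside the same sum. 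Without it, the estimate $|\fm_{j,k}|\lesssim\langle\zeta_{(1)}\rangle^{j-k+1}$ on which all of Step 3 rests, and the claim that ``each retained power of $\l^{-2}$ becomes a derivative on $u$, distributed exactly as before,'' are unsupported; the hand check at $k=2$ does not indicate how to organize the cancellation for general $k$, where the number of spurious factors and of residue terms grows with $k$. The bound \eqref{e:tau2bound} is particularly delicate: the fractional gain $|\l|^{-2\a}$ requires placing $L+\a$ derivatives on a single factor while keeping the other three at regularity $\s(L)$, which presupposes precise structural control of where the large frequency can land inside the unexpanded term.

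This missing structural input is exactly what the paper supplies by a different organization of the same asymptotics: instead of residues in a free frequency, it expands the symbol of the resolvent $L_u^{-1}(\l)$ in powers of $\l^{-2}$ (following G\'erard), proves by induction (Lemma~\ref{l:Rkform}) that each term has the explicit form $Q_\g(x)P_{|\g|}(p)(p^2-1)^{-k-1}$ --- so the polynomial structure and the derivative count $|\g|=j-k+1$ are manifest, with no cancellation left to verify --- and then extracts the $\mu_{j,m}$'s and the remainders via the operator identities \eqref{e:Acommute}--\eqref{e:Dcommute}, integration by parts to cap the number of derivatives on any single $U$ at $L$, Hilbert--Schmidt bounds, and the overload Lemma~\ref{l:overload} for the fractional gain in \eqref{e:tau2bound}. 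To complete your route you would in effect have to prove an analogue of Lemma~\ref{l:Rkform} for the residue expansion, which is precisely the ``bulk of the work'' you defer.
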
	

We postpone the proof of the Lemma until Section~\ref{s:decompproof}.  

\subsection{Proof of Theorem \ref{t:Hs}}

\label{s:mainproof}

In this section, we will prove Theorem~\ref{t:Hs}, assuming the result of Lemma~\ref{l:trdecomp}.  For~$s\in \N^*$, the conclusion follows easily from Lemma~\ref{l:trdecomp}, together with~\eqref{e:Ejmu},~\eqref{e:trtu2expanded},  and an induction argument; we provide the details presently.  Actually, the case $s = 1$ was already proved in the Introduction.  Therefore, let us turn to our inductive hypothesis.  For $k=1, \ldots, \ell-1$, we assume that the following bound holds.
\begin{equation}
\label{e:induction}
\sup_{ t\in \R}\|u(t)\|_{H^{k}(\R)} \le C(k,\|u_0\|_{H^{k}(\R)}). 
\end{equation}
We will prove that the same bound holds with $k=\ell\ge 2$.  

First of all, for any integer~$\ell\ge 2$, and any time~$t$, we have 
\begin{align*}
\|u(t)\|_{\dot{H}^\ell(\R)}^2  & = C(\ell) \mu_{2\ell,1}(u(t)) & \text{ by } \eqref{e:trtu2expanded} \\ 
& = C(\ell) \left[ E_{2\ell}(u(t)) - \sum_{k=2}^{2\ell+1} \mu_{2\ell, k}(u(t)) \right]  & \text{ by } \eqref{e:Ejmu} \\
& \le C(\ell) E_{2\ell}(u_0) + \frac{1}{2} \|u(t)\|_{\dot{H}^\ell(\R)}^2 + C(\ell,\|u_0\|_{H^{\ell-1}(\R)}).
\end{align*}
To pass to the last line, we used time-independence of $E_{2\ell}(u(t))$, the bounds \eqref{e:mu2lbd}, and our inductive hypothesis~\eqref{e:induction} (with~$k=\ell-1$). Finally, using that 
\[
E_{2\ell}(u_0) = \sum_{k=1}^{2\ell+1} \mu_{2\ell, k}(u_0) \le C(\ell, \|u_0\|_{H^{\ell}(\R)}),
\]
we get 
\begin{equation*}
\sup_{t\in \R} \|u(t)\|_{\dot{H}^\ell(\R)} \le C(\ell,  \|u_0\|_{H^{\ell}(\R)}),
\end{equation*}
which finishes the induction argument, and thus the proof of Theorem~\ref{t:Hs} for~$s\in \N^*$.  

It remains to consider the situation where~$s\notin \N^*$.  We start by recording the characterization of~$\f_{L}(u,\rho)$ in terms of the remainders~$\t_L^k(u,\sqrt{i\rho})$, and we also set notation for the quadratic part of~$\f_L(u,\rho)$.    We also note that the case~$L=0$ is included in the definition.
\begin{align}
\label{e:fLdef}
\f_L(u,\rho) & = \im\left[ \ln a_u(\sqrt{i\rho}) - \sum_{j=0}^{2L+1}\frac{E_j(u)}{(i\rho)^j} \right] = \im \left[ \sum_{k=1}^{2L+2} \t_L^k(u,\sqrt{i\rho}) + \t_L^*(u,\sqrt{i\rho}) \right], & L\in \N,\\
\label{e:fL0def}
\f_{L,0}(u,\rho) & = \im \t^1_L(u,\sqrt{i\rho}) =  \frac{(-1)^L}{2^{2L+1}\rho^{2L}} \int_\R \frac{\zeta^{2L+2} |\widehat{u}(\zeta)|^2}{\zeta^2 + 4\rho^2} \dd\zeta, & L\in \N.
\end{align}

The conclusion of Theorem~\ref{t:Hs} for noninteger $s\geq\frac12$ will be deduced from the following two Lemmas.
\begin{LEMMA}
	\label{l:Hscomparison}
	Suppose~$u\in \cS(\R)$,~$s>0$,~$s\notin \N^*$, and~$R>0$.  Then the following comparison holds. 
	\begin{equation}
	\label{e:Hstau1[s]comparison}
	\int_{\R_+} \rho^{2s-1} |\f_{[s],0}(u, \rho)| \dd\rho \lesssim_s \|u\|_{\dot{H}^s(\R)}^2 \lesssim_s \int_R^\infty \rho^{2s-1} |\f_{[s],0}(u, \rho)| \dd\rho + R^{2(s-[s])}\|u\|_{\dot{H}^{[s]}(\R)}^2.
	\end{equation}
\end{LEMMA}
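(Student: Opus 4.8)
The plan is to reduce everything to a one-variable computation about the kernel
\[
K_s(\zeta) := \int_{\R_+} \rho^{2s-1} \cdot \frac{\rho^{2[s]} \cdot \rho^{-2[s]}}{\zeta^2 + 4\rho^2} \dd\rho = \int_{\R_+} \frac{\rho^{2s-1}}{\zeta^2+4\rho^2}\dd\rho,
\]
after interchanging the $\rho$-integral with the $\zeta$-integral in the definition~\eqref{e:fL0def} of $\f_{[s],0}$. Writing $L = [s]$, we have (up to the harmless sign $(-1)^L$ and the factor $2^{-2L-1}$, which I will absorb into the $\lesssim_s$)
\[
\int_{\R_+}\rho^{2s-1}|\f_{L,0}(u,\rho)|\dd\rho = 2^{-2L-1}\int_\R \zeta^{2L+2}|\widehat u(\zeta)|^2 \Big(\int_{\R_+} \frac{\rho^{2(s-L)-1}}{\zeta^2+4\rho^2}\dd\rho\Big)\dd\zeta,
\]
which is legitimate by Tonelli since the integrand is nonnegative. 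Substituting $\rho = |\zeta|\,r/2$ in the inner integral gives $\int_{\R_+}\rho^{2(s-L)-1}(\zeta^2+4\rho^2)^{-1}\dd\rho = c_{s,L}\,|\zeta|^{2(s-L)-2}$, where $c_{s,L} = 2^{2L-2s}\int_0^\infty r^{2(s-L)-1}(1+r^2)^{-1}\dd r$ is a \emph{finite} positive constant precisely because $0 < s - L < 1$ (here I use $s\notin\N^*$, so $s - [s]\in(0,1)$; the convergence at $r=0$ needs $s-L>0$ and at $r=\infty$ needs $s-L<1$). Hence the full left integral equals $c_{s,L}' \int_\R |\zeta|^{2L+2}|\zeta|^{2(s-L)-2}|\widehat u(\zeta)|^2\dd\zeta = c_{s,L}'\int_\R|\zeta|^{2s}|\widehat u(\zeta)|^2\dd\zeta = c_{s,L}'\|u\|_{\dot H^s(\R)}^2$ by Plancherel. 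This establishes \emph{both} inequalities in the first `$\lesssim_s$' of \eqref{e:Hstau1[s]comparison} (in fact with equality up to the constant), so the left-hand comparison is exact.

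For the right-hand comparison I split the $\rho$-integral at $R$: $\|u\|_{\dot H^s}^2 \simeq_s \int_{\R_+}\rho^{2s-1}|\f_{L,0}(u,\rho)|\dd\rho = \int_0^R(\cdots) + \int_R^\infty(\cdots)$, and I must bound the low-frequency-in-$\rho$ piece $\int_0^R \rho^{2s-1}|\f_{L,0}(u,\rho)|\dd\rho$ by $C_s R^{2(s-L)}\|u\|_{\dot H^L}^2$. Repeating the Tonelli interchange on $[0,R]$ only, the inner integral becomes $\int_0^R \rho^{2(s-L)-1}(\zeta^2+4\rho^2)^{-1}\dd\rho \le \int_0^R \rho^{2(s-L)-1}\cdot\tfrac{1}{4\rho^2}\,\dd\rho$... that diverges, so instead I bound it by the cruder $\int_0^R \rho^{2(s-L)-1}\zeta^{-2}\dd\rho = \tfrac{R^{2(s-L)}}{2(s-L)}\,\zeta^{-2}$, valid since $s-L>0$ makes this integral converge at $0$ and we simply drop the $4\rho^2$ in the denominator. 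This yields $\int_0^R\rho^{2s-1}|\f_{L,0}(u,\rho)|\dd\rho \lesssim_s R^{2(s-L)}\int_\R \zeta^{2L+2}\zeta^{-2}|\widehat u(\zeta)|^2\dd\zeta = R^{2(s-L)}\|u\|_{\dot H^L(\R)}^2$, which is exactly the error term in \eqref{e:Hstau1[s]comparison} (recall $L=[s]$). Combining, $\|u\|_{\dot H^s}^2 \lesssim_s \int_R^\infty\rho^{2s-1}|\f_{L,0}(u,\rho)|\dd\rho + R^{2(s-[s])}\|u\|_{\dot H^{[s]}}^2$.

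The only genuinely delicate point — and the place where the hypothesis $s\notin\N^*$ is essential — is the convergence of the $r$- (equivalently $\rho$-) integral defining $c_{s,L}$: at the origin one needs $2(s-L)-1 > -1$, i.e. $s > L = [s]$, and at infinity one needs $2(s-L)-1 < 1$, i.e. $s < L+1$; both fail (at one end or the other) exactly when $s$ is a positive integer, consistent with the fact that for integer $s$ one uses the conserved energy directly rather than this generalized-energy machinery. Everything else is Tonelli/Plancherel bookkeeping, so I expect no real obstacle there; I would just take care to state the interchange of integration order with the nonnegativity justification explicitly.
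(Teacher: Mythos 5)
Your proposal is correct and takes essentially the same approach as the paper: interchange the order of integration, rescale $\rho$ by $|\zeta|/2$ to identify the full $\rho$-integral with a constant multiple of $\|u\|_{\dot H^s(\R)}^2$ (the constant finite precisely because $0<s-[s]<1$), and control the $\int_0^R$ piece by discarding $4\rho^2$ in the denominator, which is exactly the paper's bound $\tfrac{1}{1+z^2}\le 1$ written in the unrescaled variable. (Only a cosmetic slip: your preliminary display for $K_s(\zeta)$ mishandles the prefactor $\rho^{-2[s]}$ from $\f_{[s],0}$, but the computation you actually carry out uses the correct exponent $2(s-[s])-1$.)
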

\begin{proof}
	Let us define the function~$f_\nu:\R\to \R$, for~$0<\nu< 1$, by~$f_\nu(z) = \frac{|z|^{2\nu - 1}}{1 + z^2}$.	Note that~$f_\nu\in L^1(\R)$ for this range of~$\nu$.  
	
	We make a direct substitution of the formula~\eqref{e:fL0def} for~$\f_{[s],0}(u, \rho)$ into the left side of~\eqref{e:Hstau1[s]comparison}, then we switch the order of integration. Continuing the computation yields 	
	\begin{align*}
	\int_R^\infty \rho^{2s-1} |\f_{[s],0}(u, \rho)|\dd\rho 
	& = \frac{1}{2^{2[s]+1}} \int_\R \zeta^{2[s]+2}|\widehat{u}(\zeta)|^2 \int_R^\infty  \frac{\rho^{2(s-[s])-1}}{\zeta^2 + 4\rho^2} \dd\rho\, \dd\zeta \\
	& = \frac{1}{2^{2s+1}} \int_\R |\zeta|^{2s}|\widehat{u}(\zeta)|^2 \int_{\frac{2R}{|\zeta|}}^\infty f_{s-[s]}(z)\dz\,  \dd\zeta \\
	& =\frac{1}{4^{s+1}} \|f_{s-[s]}\|_{L^1(\R)} \|u\|_{\dot{H}^s(\R)}^2 - \frac12\int_\R \left| \frac{\zeta}{2}\right|^{2s} |\widehat{u}(\zeta)|^2 \int_{0}^{\frac{2R}{|\zeta|}} f_{s-[s]}(z)\dz\, \dd\zeta.
	\end{align*}
	We estimate the second term on the right by means of  the trivial replacement~$\frac{1}{1+z^2} \le 1$:
	\begin{align*}
	\frac12\int_\R \left| \frac{\zeta}{2}\right|^{2s} |\widehat{u}(\zeta)|^2 \int_{0}^{\frac{2R}{|\zeta|}} f_{s-[s]}(z)\dz \,\dd\zeta
	& \le \frac12 \int_\R \left| \frac{\zeta}{2}\right|^{2s} |\widehat{u}(\zeta)|^2 \int_{0}^{\frac{2R}{|\zeta|}} z^{2(s-[s])-1}\dz\, \dd\zeta =  \frac{R^{2(s-[s])}}{s-[s]}\cdot  \frac{\|u\|_{\dot{H}^{[s]}(\R)}^2}{4^{[s]+1}}.
	\end{align*}
	The comparison~\eqref{e:Hstau1[s]comparison} follows.
\end{proof}

\begin{LEMMA}
	\label{l:phiLphiL0}
	Suppose~$u\in \cS(\R)$,~$s>0$,~$s\notin \N^*$. Denoting $\beta = \max\{[s],\, \frac{s+[s]+1}{4([s]+1)},\,\frac13\}$, we have
	\begin{equation}
	\label{e:phiLphiL0}
	|\f_{[s]}(u, \rho) - \f_{[s],0}(u, \rho)|\leq \frac{C(s, \|u\|_{H^{\beta}(\R)})}{\rho^{s+[s] + 1}}(\|u\|_{H^s(\R)}+1), \quad \forall \rho\geq R_0.
	\end{equation}
	
\end{LEMMA}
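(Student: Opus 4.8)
Write $L = [s]$ throughout. By the characterization~\eqref{e:fLdef}, the difference $\f_{L}(u,\rho) - \f_{L,0}(u,\rho)$ equals $\im\big[ \sum_{k=2}^{2L+2} \t_L^k(u,\sqrt{i\rho}) + \t_L^*(u,\sqrt{i\rho})\big]$, since $\f_{L,0} = \im \t_L^1$. The plan is to bound each of these remainder terms individually on the ray $\l^2 = i\rho$, $\rho \ge R_0$, using the estimates collected in Lemma~\ref{l:trdecomp} and the bound~\eqref{e:tau*} for $\t_L^*$, and then to check that the combined bound has the stated form. Note that on this ray $|\l|^2 = \rho$ and $\l \in \Gamma_\d$ automatically (indeed $\arg(\l^2) = \pi/2$), so all the hypotheses "$|\l|^2 > R_0$, $\l \in \Gamma_\d$" are met.

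The term $\t_L^2(u,\sqrt{i\rho})$ is the delicate one, since it has the slowest decay; I will estimate it using~\eqref{e:tau2bound} with a carefully chosen $\a \in [0,1)$. Setting $\a = s - L = s - [s] \in (0,1)$ (legitimate precisely because $s \notin \N^*$), we get from~\eqref{e:tau2bound} that
\[
|\t_L^2(u,\sqrt{i\rho})| \lesssim_s \frac{\|u\|_{H^{\s(L)}(\R)}^3 \|u\|_{H^{L+\a}(\R)}}{\rho^{2L+1+\a}} = \frac{\|u\|_{H^{\s(L)}(\R)}^3 \|u\|_{H^{s}(\R)}}{\rho^{s+L+1}},
\]
which already carries the claimed power $\rho^{-(s+[s]+1)}$, the factor $\|u\|_{H^s(\R)}$, and a constant depending only on $\|u\|_{H^{\s(L)}(\R)}$; since $\s(L) = \max\{L, \tfrac13\} \le \beta$ by definition of $\beta$, this is absorbed into $C(s,\|u\|_{H^\beta(\R)})$. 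For $k \in \{3,\ldots,2L+2\}$ (nonempty only when $L \ge 1$), the bound~\eqref{e:taubound} gives $|\t_L^k(u,\sqrt{i\rho})| \lesssim \|u\|_{H^L(\R)}^{2k} \rho^{-(2L+2)}$; since $2L+2 \ge s + L + 1$ (because $L \ge s - 1$) and $\rho \ge R_0 \ge 1$, this is dominated by $\rho^{-(s+L+1)}$ times a constant depending on $\|u\|_{H^L(\R)}$, and $L \le \beta$, so again it is absorbed into $C(s,\|u\|_{H^\beta(\R)})$ (here one may even drop the extra $\|u\|_{H^s(\R)}+1$ factor). When $L = 0$ this family is empty, so there is nothing to check.

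It remains to handle $\t_L^*(u,\sqrt{i\rho})$. Apply~\eqref{e:tau*} with $p$ chosen large enough that the decay exponent $(4L+4)(1 - \tfrac2p)$ exceeds $s + L + 1$; since $4L+4 > s + L + 1$ for all $s \in [\tfrac12,\infty)$ with $[s]=L$ (as $3L + 3 > s$), such a $p < \infty$ exists, and indeed one checks that $p$ can be taken depending only on $s$. This yields
\[
|\t_L^*(u,\sqrt{i\rho})| \lesssim_{s} \frac{\|u\|_{H^{s^*(p)}(\R)}^{4L+4}\,\|u\|_{L^2(\R)}^2}{\rho^{(4L+4)(1-\frac2p)/2}} \le \frac{C(s,\|u\|_{H^{s^*(p)}(\R)})}{\rho^{s+L+1}},
\]
and since $s^*(p) = \tfrac12 - \tfrac1p < \tfrac12 \le \s(L) \le \beta$, this too is absorbed into $C(s,\|u\|_{H^\beta(\R)})$. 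Summing the $2L+2$ contributions (a number depending only on $s$) gives~\eqref{e:phiLphiL0}. The main obstacle is purely bookkeeping: one must verify that the exponent $\beta = \max\{[s],\, \tfrac{s+[s]+1}{4([s]+1)},\,\tfrac13\}$ simultaneously dominates $\s(L) = \max\{L,\tfrac13\}$, the exponent $L$ appearing in~\eqref{e:taubound}, and the Sobolev exponent $s^*(p)$ arising from the choice of $p$ in~\eqref{e:tau*} — the middle term $\tfrac{s+[s]+1}{4([s]+1)}$ in $\beta$ is precisely what is needed to guarantee one can pick $p$ with $(4L+4)(1-\tfrac2p) \ge s+L+1$ while keeping $s^*(p) \le \beta$, so the argument closes.
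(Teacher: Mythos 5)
Your overall route is the same as the paper's (decompose via \eqref{e:fLdef}, estimate $\t_{[s]}^2$ with $\a=s-[s]$, the terms $k\ge 3$ via \eqref{e:taubound}, and $\t_{[s]}^*$ via \eqref{e:tau*}), and the treatment of $\t^2_{[s]}$ and of $\t^k_{[s]}$, $k\ge3$, is correct. The gap is in the $\t^*_{[s]}$ term, in two places. First, your criterion for choosing $p$ is off by a factor of $2$: the exponent $(4L+4)(1-\tfrac2p)$ in \eqref{e:tau*} is a power of $|\l|$, and on the ray $\l^2=i\rho$ one has $|\l|=\rho^{1/2}$, so the $\rho$-decay is $(2L+2)(1-\tfrac2p)$; requiring only $(4L+4)(1-\tfrac2p)>s+L+1$ gives $\rho$-decay of order roughly $\tfrac{s+L+1}{2}$, which does not justify the last inequality in your display. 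What you need is $(2[s]+2)(1-\tfrac2p)\ge s+[s]+1$, which is satisfiable since $s<[s]+1$. Second, and more seriously, your absorption step ``$s^*(p)=\tfrac12-\tfrac1p<\tfrac12\le\s(L)\le\beta$'' is false precisely in the main case $[s]=0$ (i.e.\ $\tfrac12\le s<1$): there $\s(0)=\tfrac13$ and $\beta=\max\{\tfrac{s+1}{4},\tfrac13\}<\tfrac12$, so $\|u\|_{H^{s^*(p)}(\R)}$ is not controlled by $\|u\|_{H^\beta(\R)}$ for an arbitrary large $p$.

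These two constraints pull in opposite directions: making $p$ larger improves the decay but increases $s^*(p)$, and when $[s]=0$ with $\tfrac{s+1}{4}\ge\tfrac13$ the requirement $s^*(p)\le\beta$ forces $(2[s]+2)(1-\tfrac2p)\le s+[s]+1$. Hence the only admissible choice is essentially the critical one, which is exactly what the paper does: pick $p$ solving $2([s]+1)(1-\tfrac2p)=s+[s]+1$, for which $s^*(p)=\tfrac{s+[s]+1}{4([s]+1)}$, i.e.\ precisely the middle term in the definition of $\beta$. With that specific choice your display becomes correct and the absorption into $C(s,\|u\|_{H^\beta(\R)})$ goes through; as written, your ``take $p$ large enough'' argument does not close for $\tfrac12\le s<1$. (A minor additional point: $R_0\ge1$ is not stated, but one may take $R_0$ larger without loss, or absorb $R_0$-dependence into the constant via $\|u\|_{H^{1/3}(\R)}\lesssim\|u\|_{H^\beta(\R)}$.)
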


\begin{proof}
	Choose~$p>2$ to solve~$2([s]+1)(1-\frac2p) = s+[s]+1$.  (Note that $s^*(p) = \frac{s+[s]+1}{4([s]+1)}$ for this choice of $p$.) Then for~$\rho >R_0$, we have
	\begin{align*}
	& |\f_{[s]}(u, \rho) - \f_{[s],0}(u, \rho)|
	\le \sum_{k=2}^{2[s]+2}
	|\t_{[s]}^k(u,\sqrt{i\rho})| + |\t^*_{[s]}(u,\sqrt{i\rho})| & \text{ by } \eqref{e:fLdef}, \eqref{e:fL0def}\\
	&\le C(s) \bigg[ \frac{\|u\|_{H^{\beta}(\R)}^3 \|u\|_{H^s(\R)}}{\rho^{s + [s] + 1}} + \sum_{k=3}^{2[s]+2} \frac{\|u\|_{H^{[s]}(\R)}^{2k}}{\rho^{2[s]+2}} + \frac{\|u\|_{H^{s^*(p)}(\R)}^{4[s]+4} \|u\|_{L^2(\R)}^2}{\rho^{(2[s]+2)(1-\frac2p)}} \bigg] & \text{ by } \eqref{e:tau2bound}, \eqref{e:taubound}, \eqref{e:tau*}\\
	&\le\frac{C(s, \|u\|_{H^{\beta}(\R)})}{\rho^{s+[s]+1}}(\|u\|_{H^s(\R)}+1).
	\end{align*}
	In the second line, we understand the sum over $k$ to be empty if $[s]=0$.	
\end{proof}

The conclusion of Theorem~\ref{t:Hs} for noninteger $s\geq\frac12$ follows from Lemmas~\ref{l:Hscomparison} and \ref{l:phiLphiL0}, the time-independence of the quantity~$\f_{[s]}(u,\rho)$ for solutions of the DNLS equation, and the bound
\begin{equation}
\label{e:beta}
\sup_{t\in \R} \|u(t)\|_{H^\b(\R)} \le C(\b, \|u_0\|_{H^\b(\R)}),
\end{equation}
where $\b=\max\{[s], \frac{s+[s]+1}{4([s]+1)}, \frac13\}$ is as in the statement of Lemma~\ref{l:phiLphiL0}.  The bound~\eqref{e:beta} follows from our induction argument if $s>1$ and from the result of Harrop-Griffiths, Killip, and Vi\c{s}an \cite{HarropGriffithsKillipVisan2021} if $\frac12\le s<1$.

Let us give the remaining details of the proof of Theorem~\ref{t:Hs} presently.  For any $t\in \R$, we have 
\begin{align*}
\|u(t)\|_{\dot{H}^s(\R)}^2 
&\lesssim_s\int_{R_0}^\infty \rho^{2s-1} |\f_{[s],0}(u(t),\rho)| \dd\rho + R_0^{2(s-[s])}\|u(t)\|_{H^{[s]}(\R)}^2 \\
& \le C(s)\int_{R_0}^\infty \rho^{2s-1} |\f_{[s]}(u(t),\rho)| \dd\rho + C(s,R_0, \|u(t)\|_{H^{\b}(\R)})( \|u(t)\|_{H^s(\R)}+1) \\
& \le C(s)\int_{R_0}^\infty \rho^{2s-1} |\f_{[s]}(u_0,\rho)| \dd\rho + C(s,\|u_0\|_{H^{s}(\R)})( \|u(t)\|_{H^s(\R)}+1)\\
& \le  \frac12 \|u(t)\|^2_{H^s(\R)}+C(s,\|u_0\|_{H^{s}(\R)}),
\end{align*}
which establishes the desired conclusion. Note that the first line in the calculation above is simply the upper bound in Lemma~\ref{l:Hscomparison}. To pass from the first line to the second, we use Lemma~\ref{l:phiLphiL0}, followed by the lower bound of Lemma~\ref{l:Hscomparison}.  We use \eqref{e:beta} and the time independence of $\f_{[s]}(u(t), \rho)$ to pass to the third line.  Finally, we justify the last line by noting that 
\[
\int_{R_0}^\infty \rho^{2s-1} |\f_{[s]}(u_0,\rho)| \dd\rho \lesssim_s C(s,\|u_0\|_{H^{s}(\R)}),
\]
which follows from an application of Lemma~\ref{l:phiLphiL0}, followed by the lower bound in Lemma~\ref{l:Hscomparison}.

\section{Proof of Lemma \ref{l:trdecomp}}

\label{s:decompproof}

\subsection{Outline of the Proof}

In this section, we expand each~$\tr(T_u^{2k}(\l))$ in powers of~$\l^{-2}$, up to a specified order, and we establish bounds on the remainders, in order to prove our key Lemma~\ref{l:trdecomp}.  In Section~\ref{ss:L0}, we consider the case~$L=0$, which is easy to treat explicitly but does not fit naturally into our argument for the other cases.  When~$L\ge 1$, we follow the strategy of~\cite{GerardIntegrales}, deducing the expansions of the traces from the expansion of the resolvent~$L_u(\l)^{-1}$. The relationship between~$T_u(\l)$ and~$L_u(\l)$ is the following:
\begin{equation}
L_u(\l) = (\cL_0 - \l^2)(I - T_u(\l)).
\end{equation}
Therefore, 
\begin{equation}
\label{e:Luinvexp}
L_u(\l)^{-1} = (I - T_u(\l))^{-1} (\cL_0 - \l^2)^{-1} = \sum_{n=0}^\infty \underbrace{T_u(\l)^{n} (\cL_0 - \l^2)^{-1}}_{=:\cR_n},\qquad \|T_u(\l)\|<1.
\end{equation}
The point is that 
\begin{equation}
\label{e:TuversusR}
T_u^{2k}(\l) = i\l \cR_{2k-1} U.
\end{equation}
Thus, the part of~$L_u^{-1}(\l)$ that is of relevance to us is~$\cR_{2k-1}$, i.e., the term in the expansion \eqref{e:Luinvexp} that is homogeneous of degree~$2k-1$ in~$u, \overline{u}$.  In particular, we seek an expansion of~$\l\cR_{2k-1}$ in powers of~$\l^{-2}$, up to order~$\l^{4L+2}$ for a given~$L\in \N^*$, and a good understanding of the remainder term.  

Our strategy will be to examine the symbol $R(x,\zeta)$ of the pseudodifferential operator $L_u(\l)^{-1}$.  In Section~\ref{ss:ExpandRes}, we will expand the diagonal and antidiagonal parts $R^d(x,\zeta)$ and $R^a(x,\zeta)$ of $R(x,\zeta)$ in powers of $\l^{-2}$, determining recursively the form of each term of the expansion.  Homogeneity considerations will then give us the desired  expansion of $\l\cR_{2k-1}$ (and thus of $\tr T_u^{2k}(\l)$) in powers of~$\l^{-2}$.  In Section~\ref{ss:extractmuj}, we identify the~$\mu_{j,k}(u)$'s from~\eqref{e:trdecomp} and separate them from the remainder term.  In Section~\ref{ss:estrem} we estimate the remainder term, finishing the proof of the Lemma.  The final Section~\ref{ss:induction} consists of the proof by induction of a technical result stated in Section~\ref{sss:RaRdformalexp}, on the form of the terms of the expansions for~$R^d$ and~$R^a$.

\subsection{Case $L=0$}

\label{ss:L0}

Let us note first of all that the desired decomposition in the case~$L=0$ reads 
\[
\ln a_u(\l) = [\underbrace{\mu_{0,1}(u) + \l^{-2} \mu_{1,1}(u) + \t_0^1(u,\l)}_{=-\frac12 \tr T_u^2(\l)}] + [\underbrace{\l^{-2} \mu_{1,2}(u) + \t_0^2(u,\l)}_{=-\frac14 \tr T_u^4(\l)}] + \t_0^*(u,\l).
\]
(See the table in Section~\ref{ss:detcharau}.) The only term which we have not already understood is~$\t_0^2(u,\l)$; in order to treat it, we decompose~$T_u^4(\l)$ explicitly as follows.  A computation (the details of which are contained, for instance, in~\cite{BahouriPerelman2020}) tells us that 
\[
\tr T_u^4(\l)
= i(2\l^2)^2 \int_\R \overline{u}(x) \big( (D+2\l^2)^{-1} u(x)\big)^2 (D-2\l^2)^{-1} \overline{u}(x) \dx.
\] 
Then, making a few simple manipulations, we can bring the right side of the equation above into the following form. 
\begin{small}
	\begin{align*}
	\tr T_u^4(\l) & = \frac{i}{-2\l^2} \int_\R \overline{u}(x) \bigg[ u(x) -  (D+2\l^2)^{-1} Du(x)\bigg]^2 \big[ \overline{u}(x) - (D-2\l^2)^{-1} D\overline{u}(x) \big] \dx \\
	& = -\frac{i}{2\l^2} \bigg[ \int_\R |u(x)|^4\dx - \int_\R |u|^2 u(x) (D-2\l^2)^{-1} D\overline{u}(x)\dx - 2\int_\R |u|^2 \overline{u}(x)(D+2\l^2)^{-1}Du(x)\dx \\
	& \qquad\qquad  + 2\int_\R |u(x)|^2 (D+2\l^2)^{-1}Du(x) (D-2\l^2)^{-1}D\overline{u}(x)\dx + \int_\R ((D+2\l^2)^{-1}Du(x))^2 \overline{u}(x)^2\dx  \\
	& \qquad\qquad - \int_\R \overline{u}(x) ((D + 2\l^2)^{-1}Du(x))^2 ((D - 2\l^2)^{-1}D\overline{u}(x))\dx \bigg] \\
	& = -\frac{4}{\l^2} \underbrace{\left[ \frac{i}{8} \|u\|_{L^4(\R)}^4 \right]}_{=\mu_{1,2}(u)} - 4\t_0^2(u,\l).
	\end{align*}
\end{small}

To estimate~$\t_0^2(u,\l)$, we use the following simple Lemma, the proof of which we omit.
\begin{LEMMA}
	\label{l:DinvDu}
	The following estimates hold, for~$\l\in \Gamma_\d$.
	\begin{itemize}
		\item If $0\le \a_1 \le \a_2\le 1$, then
		\begin{equation}
		\left\| (D\pm 2\l^2)^{-1} Du \right\|_{\dot{H}^{\a_1}(\R)} \lesssim_{\a_2-\a_1} \frac{ \|u\|_{\dot{H}^{\a_2}(\R)}}{(2\im (\l^2))^{\a_2-\a_1}}, \quad \forall\, u\in H^{\a_2}(\R).
		\end{equation}	
		\item If $2\le p< \infty$, then
		\begin{equation}
		\left\| (D\pm 2\l^2)^{-1} Du \right\|_{L^p(\R)} \lesssim_p \|u\|_{H^{s^*(p)}(\R)}, \quad \forall\, u\in H^{s^*(p)}(\R).	
		\end{equation}
	\end{itemize}
\end{LEMMA}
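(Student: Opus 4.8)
The plan is to work on the Fourier side, where both operators in play act as Fourier multipliers. Writing $D = -i\p_x$, the operator $(D \pm 2\l^2)^{-1}D$ has symbol $m_\pm(\zeta) = \frac{\zeta}{\zeta \pm 2\l^2}$, so that $\widehat{(D\pm 2\l^2)^{-1}Du}(\zeta) = m_\pm(\zeta)\widehat{u}(\zeta)$. The whole lemma reduces to pointwise bounds on $m_\pm(\zeta)$ and its interaction with powers of $|\zeta|$, combined with Plancherel and (for the second item) Sobolev embedding.

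For the first item, I would estimate the relevant multiplier $M_\pm(\zeta) := |\zeta|^{\a_1} m_\pm(\zeta) |\zeta|^{-\a_2} = \frac{|\zeta|^{\a_1 - \a_2}\,\zeta}{\zeta \pm 2\l^2}$ and show $\|M_\pm\|_{L^\infty(\R)} \lesssim_{\a_2-\a_1} (2\im\l^2)^{-(\a_2-\a_1)}$ uniformly for $\l \in \Gamma_\d$; then $\|(D\pm 2\l^2)^{-1}Du\|_{\dot H^{\a_1}} = \||\zeta|^{\a_1}m_\pm\widehat{u}\|_{L^2} = \|M_\pm \cdot |\zeta|^{\a_2}\widehat{u}\|_{L^2} \le \|M_\pm\|_{L^\infty}\|u\|_{\dot H^{\a_2}}$ does it. To bound $M_\pm$, set $\gamma = \a_2 - \a_1 \in [0,1]$ and $w = \zeta \pm 2\l^2$, so $|M_\pm(\zeta)| = \frac{|\zeta|^{1-\gamma}}{|w|} = \frac{|\zeta|^{1-\gamma}}{|w|^{1-\gamma}} \cdot \frac{1}{|w|^\gamma} \le \Big(\frac{|\zeta|}{|w|}\Big)^{1-\gamma}\frac{1}{|\im w|^\gamma}$, and $|\im w| = 2\im\l^2$ (the $+$ case; the $-$ case gives $|\im w| = 2\im\l^2$ as well up to sign). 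It remains to see that $\frac{|\zeta|}{|\zeta \pm 2\l^2|}$ is bounded on $\Gamma_\d$: geometrically, $\mp 2\l^2$ lies in the sector $\{\d < \arg < \pi - \d\}$ scaled, hence is bounded away from the positive real axis at angle $\gtrsim_\d 1$, so for real $\zeta$ the distance from $\zeta$ to $\mp 2\l^2$ is $\gtrsim_\d |2\l^2| \gtrsim_\d |\zeta|$ whenever $|\zeta| \le |2\l^2|$, while for $|\zeta| \ge |2\l^2|$ one has $|\zeta \pm 2\l^2| \ge |\zeta| - |2\l^2|$ which is not enough on its own — so instead one uses $|\zeta \pm 2\l^2| \ge |\im(\zeta \pm 2\l^2)| = 2\im\l^2 \gtrsim_\d |2\l^2| \gtrsim_\d$ the comparison, or more cleanly just bounds $\frac{|\zeta|}{|\zeta \pm 2\l^2|} \le 1 + \frac{|2\l^2|}{|\zeta\pm 2\l^2|} \le 1 + \frac{|2\l^2|}{2\im\l^2} \le 1 + C(\d)$. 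That last chain is the clean way: $\frac{|\zeta|}{|w|} \le \frac{|w| + |2\l^2|}{|w|} = 1 + \frac{|2\l^2|}{|w|} \le 1 + \frac{|2\l^2|}{2\im\l^2} \le C(\d)$ on $\Gamma_\d$.

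For the second item, with $s^*(p) = \frac12 - \frac1p$, I would use Sobolev embedding $H^{s^*(p)}(\R) \hookrightarrow L^p(\R)$: it suffices to show $\|(D\pm 2\l^2)^{-1}Du\|_{H^{s^*(p)}} \lesssim \|u\|_{H^{s^*(p)}}$, which amounts to $\|m_\pm\|_{L^\infty} \lesssim 1$ uniformly on $\Gamma_\d$ — and this is exactly the $\gamma = 0$ ($\a_1 = \a_2$) case of the multiplier bound just established, namely $|m_\pm(\zeta)| = \frac{|\zeta|}{|\zeta\pm 2\l^2|} \le C(\d)$. (Here I use that $\langle\zeta\rangle^{s^*(p)}m_\pm(\zeta)\langle\zeta\rangle^{-s^*(p)} = m_\pm(\zeta)$, so the same $L^\infty$ bound controls the $H^{s^*(p)} \to H^{s^*(p)}$ operator norm.) The main obstacle — such as it is — is purely the uniformity in $\l$: one must check that all implied constants depend only on $\d$ (and on $\a_2 - \a_1$, resp. $p$) and not on $|\l|$; this is where the defining inequality $\frac{|\l|^2}{\im\l^2} \le C(\d)$ on $\Gamma_\d$ gets used, and it is the only nontrivial ingredient. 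Everything else is Plancherel and Sobolev embedding, which is presumably why the authors omit the proof.
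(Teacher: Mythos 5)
Your proof is correct and follows essentially the same route as the paper's (omitted) argument: a Plancherel/Fourier-multiplier estimate splitting $|\zeta\pm2\l^2|$ into a factor bounded below by $2\im\l^2$ and a factor comparable to $|\zeta|$ up to $C(\d)$ on $\Gamma_\d$, with the second item reduced to the $\a_1=\a_2$ case plus Sobolev embedding. No gaps; the only cosmetic point is that the brief detour about $|\zeta|\ge|2\l^2|$ is superfluous once you have the clean chain $\frac{|\zeta|}{|\zeta\pm2\l^2|}\le 1+\frac{|2\l^2|}{2\im\l^2}\le C(\d)$.
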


We estimate one of the terms defining~$\t_0^2(u,\l)$ explicitly; the others can be dealt with in an entirely similar way.  
\begin{align*}
& \left| \frac{1}{\l^2} \int_\R \overline{u}(x) ((D + 2\l^2)^{-1}Du(x))^2 ((D - 2\l^2)^{-1}D\overline{u}(x))\dx\right| \\
& \quad \le \frac{1}{|\l|^2} \|u\|_{L^6(\R)} \|(D + 2\l^2)^{-1}Du\|_{L^6(\R)}^2 \|(D - 2\l^2)^{-1}D\overline{u}\|_{L^2(\R)} 
\lesssim_\a \frac{\|u\|_{H^{\frac13}(\R)}^3 \|u\|_{H^\a(\R)}}{|\l|^{2+2\a}}.
\end{align*}

We conclude that~$\t_0^2(u,\l)$ satisfies the required bound, finishing the case~$L=0$.

\subsection{Expanding the Resolvent}

\label{ss:ExpandRes}

\subsubsection{Formal Expansion of $R^a$ and $R^d$}

\label{sss:RaRdformalexp}

As stated above, for~$L\ge 1$ we seek an expansion of the symbol of~$L_u^{-1}(\l)$, in powers of~$\l^{-2}$.  That is, we seek to understand~$R(x,\zeta)$ in the expression
\begin{equation}
L_u^{-1}(\l)f = \frac{1}{\sqrt{2\pi}} \int \dd\zeta e^{ix\zeta} R(x,\zeta) \widehat{f}(\zeta).
\end{equation}	
The identity~$L_u(\l)R(x,D) = I$ implies
\begin{equation}
\label{e:Linverse}
i \s_3 \p_x R(x,\zeta) - (\zeta \s_3 + \l^2)R(x,\zeta) - i\l U(x)R(x,\zeta) = I.
\end{equation}	
Introducing the new variable~$p = \frac{\zeta}{\l^2}$, this reads 	
\begin{equation}
\label{e:Linversep}
i \s_3 \p_x R(x,\zeta) - \l^2(p \s_3 + 1)R(x,\zeta) - i\l U(x)R(x,\zeta) = I.
\end{equation}		

We split~$R$ into its diagonal and antidiagonal parts~$R^d$ and~$R^a$, respectively,
\[
R(x,\zeta) = R^d(x,\zeta) + R^a(x,\zeta),
\]
and we also split equation~\eqref{e:Linversep} accordingly: 
\begin{equation}
\label{e:diag}
i \s_3 \p_x R^d(x,\zeta) - \l^2 (p \s_3 + 1)R^d(x,\zeta) - i\l U(x)R^a(x,\zeta) = I;
\end{equation}
\begin{equation}
\label{e:antidiag}
i \s_3 \p_x R^a(x,\zeta) - \l^2 (p \s_3 + 1)R^a(x,\zeta) - i\l U(x)R^d(x,\zeta) = 0.
\end{equation}

Setting the notation
\[
R^d(x,\zeta) = \sum_{k\ge 0} \frac{1}{\l^{2+2k}} R^d_k(x,p), 
\qquad 
R^a(x,\zeta) = \sum_{k\ge 0} \frac{1}{\l^{3 + 2k}} R^a_k(x,p),
\]
we rewrite~\eqref{e:diag} and~\eqref{e:antidiag} in expanded form:
\begin{align}
\label{e:diagexp}
I & = -(p\s_3 + 1) R_0^d + \sum_{k=1}^\infty \frac{i\s_3 \p_x R_{k-1}^d - (p \s_3 + 1)R_{k}^d - iUR_{k-1}^a}{\l^{2k}};\\
\label{e:antidiagexp}
0 & = -(p\s_3 + 1)R_0^a - iU R_0^d + \sum_{k=1}^\infty \frac{ i\s_3 \p_x R_{k-1}^a - (p\s_3 + 1)R_{k}^a - iU R_{k}^d}{\l^{2k}}.
\end{align}
We thus obtain the recursive system~\eqref{e:baserecursive}--\eqref{e:newrecursivea} below.
\begin{equation}
\label{e:baserecursive}
R_0^d(x,p) = -\frac{p\s_3-1}{p^2-1},
\qquad 
R_0^a(x,p) = -\frac{iU}{p^2-1}, 
\end{equation}
\begin{equation}
\label{e:newrecursived}
R_k^d(x,p) = \frac{1}{p^2-1} \big[-iU R_{k-1}^a(x,p) + i\p_x R_{k-1}^d(x,p)\s_3 \big] (p\s_3-1), \qquad\qquad \qquad \qquad k\ge 1,
\end{equation}
\begin{equation}
\label{e:newrecursivea}
\begin{split}
R_k^a(x,p) & = \frac{1}{p^2 - 1} \big[ iU R_k^d(x,p) + i\p_x R_{k-1}^a(x,p) \s_3 \big](p\s_3 + 1) \\
& = \frac{1}{p^2-1} \big[ U^2 R_{k-1}^a(x,p) - U\p_x R_{k-1}^d(x,p)\s_3 + i\p_x R_{k-1}^a(x,p)\s_3(p\s_3+1) \big],
\end{split}
\quad k\ge 1.
\end{equation}
Note that we used the formula for~$R_k^d(x,p)$ to pass to the second line in the formula for~$R_k^a(x,p)$.  We also used several times the fact that~$\s_3 A = -A\s_3$ for any antidiagonal matrix.

We use the computations above to clarify the form of the~$R_k^d$'s and~$R_k^a$'s; the precise statement is contained in the following Lemma. 

\begin{LEMMA}
	\label{l:Rkform}
	The~$R_k^d$'s and~$R_k^a$'s take the following form:
	\begin{equation}
	R_k^d(x,p) = \sum_{r=1}^k R_{k,r}^d(x,p), \qquad k\ge 1,
	\end{equation}
	\begin{equation}
	R_k^a(x,p) = \sum_{r=0}^k R_{k,r}^a(x,p), \qquad k\ge 0,
	\end{equation}	
	where the entries of the~$R_{k,r}^d$'s and~$R_{k,r}^a$'s are homogeneous polynomials of degrees~$2r$ and~$2r+1$, respectively, in~$u, \overline{u}$, and their derivatives.  More specifically, setting $Q_{\g} = \p_x^{\g_1} U \cdots \p_x^{\g_n} U$, for~$\g\in \N^n$, we have
		\begin{align}
		\label{e:Rdkr}
		R^d_{k,r}(x,p) 
		& = \frac{1}{(p^2-1)^{k+1}} 
		\sum_{\substack{\g \in \N^{2r} \\|\g| = k-r}} 
		Q_{\g}(x) P_{|\g|}(p)(p\s_3-1),\\
		\label{e:Rakr}
		R^a_{k,r}(x,p)  
		& = \frac{1}{(p^2-1)^{k+1}} 
		\sum_{\substack{\g\in \N^{2r+1} \\|\g| = k-r}}
		Q_{\g}(x) P_{|\g|}(p).
		\end{align}
		Here and below we use the notation $P_n$  to denote any diagonal matrix whose diagonal entries are polynomials in~$p$ having degree at most $n$.
\end{LEMMA}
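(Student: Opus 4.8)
The plan is to prove the explicit formulas \eqref{e:Rdkr}--\eqref{e:Rakr} directly, by a (simultaneous) induction on $k$; the homogeneity assertions then come for free, since $Q_\g = \p_x^{\g_1}U\cdots\p_x^{\g_n}U$ is a product of $n$ matrices each of whose entries is linear in $u$, $\overline u$ and their derivatives, so $Q_\g$ has entries that are homogeneous polynomials of degree $n$, while the remaining factors $P_{|\g|}(p)$, $(p\s_3-1)$ and $(p^2-1)^{-(k+1)}$ carry no $u$'s. (For $R^d_{k,r}$ the multi-index has length $n = 2r$, for $R^a_{k,r}$ it has length $n = 2r+1$, matching the degrees $2r$ and $2r+1$ in the statement.)

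For the base case, \eqref{e:baserecursive} gives $R^a_0 = -iU/(p^2-1)$, which is \eqref{e:Rakr} with $k=r=0$, $\g=(0)$, $Q_\g = U$, $P_0\equiv -iI$. Since $R^d_0$ is independent of $x$ we have $\p_x R^d_0 = 0$, so the $R^d_0$-contribution drops out of both \eqref{e:newrecursived} and \eqref{e:newrecursivea} at $k=1$; in particular \eqref{e:newrecursived} reduces to $R^d_1 = \frac{1}{p^2-1}(-iUR^a_0)(p\s_3-1) = -\frac{|u|^2}{(p^2-1)^2}(p\s_3-1)$, which is \eqref{e:Rdkr} with $k=r=1$, $\g=(0,0)$, $P_0\equiv -I$. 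Thus from $k=1$ onward the inductive step may be run verbatim, using only the level-$(k-1)$ formulas (the seed $R^d_0$ never intervenes except through its vanishing derivative).

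For the inductive step I would substitute the level-$(k-1)$ forms into \eqref{e:newrecursived} and into the second (expanded) form of \eqref{e:newrecursivea}, and read off the output term by term. The point is that every monomial produced stays in the shape $Q_{\g'}(x)\,P'(p)$ (for $R^a$) or $Q_{\g'}(x)\,P'(p)(p\s_3-1)$ (for $R^d$), with $P'$ a diagonal matrix of polynomials in $p$, because: left multiplication by $U$ sends $Q_\g$ to $Q_{(0,\g)}$ and multiplication by $U^2 = |u|^2 I$ sends it to $Q_{(0,0,\g)}$, prepending one, resp.\ two, factors and leaving $|\g|$ unchanged; $\p_x$ acts only on $Q_\g$, by the Leibniz identity $\p_x Q_\g = \sum_i Q_{\g+e_i}$ (unit vectors $e_i$), fixing the length and raising $|\g|$ by one; the prefactor $\frac{1}{p^2-1}$ raises the denominator from $(p^2-1)^k$ to $(p^2-1)^{k+1}$; and the only $p$-algebra required is $(p\s_3-1)\s_3 = pI-\s_3$ and $\s_3(p\s_3+1) = pI+\s_3$ (so that, e.g., $(p\s_3-1)\s_3(p\s_3-1) = (pI-\s_3)(p\s_3-1)$), which merge with $P(p)$ and raise its degree by exactly one. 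The bookkeeping then reduces to tracking, per term, the length of the multi-index, the weight $|\g|$, and the polynomial degree, and checking that the relations $|\g| = k-r$ and $\deg P \le |\g|$ are reproduced: e.g.\ the $-iUR^a_{k-1}$ piece of \eqref{e:newrecursived} carries $R^a_{k-1,r}$ (length $2r+1$, $|\g| = (k-1)-r$, $\deg P \le (k-1)-r$) to a summand of $R^d_{k,r+1}$ (length $2r+2$, $|\g|$ still $= k-(r+1)$, $\deg P$ unchanged, with a fresh $(p\s_3-1)$ supplied by the recursion), while the $\p_x R^d_{k-1}\s_3$ piece carries $R^d_{k-1,r}$ to a summand of $R^d_{k,r}$ (here $\p_x$ lifts $|\g|$ to $k-r$ and the $\s_3$-algebra lifts $\deg P$ by one, so $\deg P \le |\g|$ persists). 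The three pieces of \eqref{e:newrecursivea} are handled in the same fashion, and between them the contributions cover exactly $r\in\{1,\dots,k\}$ for $R^d_k$ and $r\in\{0,\dots,k\}$ for $R^a_k$.

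There is no real analytic content, so the one (mild) obstacle is organizational: running the increment scheme above through all of the terms produced by the recursion --- two for $R^d_k$, three for $R^a_k$ --- and confirming simultaneously that in each the denominator power becomes $(p^2-1)^{k+1}$, the multi-index has the correct length ($2r$ or $2r+1$), the weight equals $k-r$, and the polynomial degree stays $\le |\g|$, with a little attention at the endpoints $r=0,1$ and $r=k$ so that the ranges of $r$ tile $\{1,\dots,k\}$ and $\{0,\dots,k\}$ exactly. Once the increment table is in place, the induction is routine.
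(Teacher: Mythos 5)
Your proposal is correct and follows essentially the same route as the paper's own proof: induction on $k$ via the recursion \eqref{e:newrecursived}--\eqref{e:newrecursivea}, with the base cases $R^a_{0,0}$, $R^d_{1,1}$ and the same bookkeeping (prepending indices under multiplication by $U$ or $U^2$, Leibniz raising $|\g|$ by one, the $\s_3$-identities raising $\deg P$ by one, and checking $|\g|=k-r$, $\deg P\le|\g|$ persist). No substantive differences to report.
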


We postpone the proof of this Lemma until Section~\ref{ss:induction}, so as not to interrupt the flow of ideas.

\subsubsection{The Truncated Expansion, and a Formula for $\cR_{2m-1}$}
\label{sss:truncated}

For a fixed~$N\in \N^*$, we set the following notation.  (Later we will set~$N = 2L$.)
\begin{equation}
\label{e:truncatedexp}
\begin{split}
R^{(N)}(x,p) & = \underbrace{\sum_{k=0}^N \frac{R_k^d(x,p)}{\l^{2+2k}}}_{=:R_d^{(N)}(x,p)} + \underbrace{\sum_{k=0}^{N-1} \frac{R_k^a(x,p)}{\l^{3+2k}} }_{=:R_a^{(N)}(x,p)} \\
& = \underbrace{\frac{R_0^d(x,p)}{\l^2}}_{=:R^{(N)}_{d,0}(x,p)} + \sum_{r=1}^N \underbrace{\sum_{k=r}^N \frac{R_{k,r}^d(x,p)}{\l^{2+2k}}}_{=:R^{(N)}_{d,r}(x,p)} + \sum_{r=0}^{N-1} \underbrace{\sum_{k=r}^{N-1} \frac{ R_{k,r}^a(x,p)}{\l^{3+2k}}}_{=:R^{(N)}_{a,r}(x,p)}.
\end{split}
\end{equation}

The symbol~$R^{(N)}(x,p)$ is a truncated expansion of~$R(x,p)$ in inverse powers of~$\l$, having diagonal and antidiagonal parts~$R^{(N)}_d$,~$R^{(N)}_a$, respectively.  The point of this definition is that, using Lemma~\ref{l:Rkform}, we know that~$R^{(N)}_{d,r}$ is homogeneous of degree~$2r$ in~$u$,~$\overline{u}$, and their derivatives, while~$R^{(N)}_{a,r}$ is homogeneous of degree~$2r+1$ in these quantities.  Expanding $R^{(N)}$  according to \eqref{e:truncatedexp} and applying the recursive identities~\eqref{e:diagexp}--\eqref{e:antidiagexp}, we see that~$R^{(N)}(x,p)$ satisfies
\begin{equation*}
[i \s_3 \p_x  - \l^2(p \s_3 + 1) - i\l U(x)]R^{(N)}(x,p) = I+ Y^{(N)}(x,p),
\end{equation*}	
where
$Y^{(N)}(x,p)=Y^{(N)}_d(x,p)+Y^{(N)}_a(x,p)$, 
\[
Y^{(N)}_d(x,p)= \frac{1}{\l^{2+2N}} i\s_3  (\p_x R_N^d)(x,p), \quad Y^{(N)}_a(x,p)= -\frac{1}{\l^{1+2N}} R_N^a(x,p)(p\s_3 - 1).
\]
This implies
\begin{equation}
\label{e:RYN}
L_u^{-1}(\l) = R^{(N)}(x,\l^{-2} D) - L_u^{-1}(\l) Y^{(N)}(x,\l^{-2} D).
\end{equation}

Recall that~$\cR_{2m-1}$ is the term in the expansion~\eqref{e:Luinvexp} which is homogeneous of order~$2m-1$ in~$u$,~$\overline{u}$, and their derivatives.  On the other hand, the portion of~$R^{(N)}$ which is of this homogeneity is precisely~$R^{(N)}_{a,m-1}$.  Combining these considerations with~\eqref{e:RYN}, we see that~$\cR_{2m-1}$ is the difference between~$R^{(N)}_{a,m-1}$ and the part of~$L_u^{-1}(\l)Y^{(N)}(x,\l^{-2}D)$ that is homogeneous of degree~$2m-1$ in~$u$,~$\overline{u}$, and their derivatives.  
Using the expansion \eqref{e:Luinvexp} to isolate this part, we obtain:

\begin{equation}
\label{e:R2m-1}
\begin{split}
\cR_{2m-1}& = R_{a,m-1}^{(N)}(x,\l^{-2}D) - \frac{1}{\l^{2+2N}}\sum_{\substack{k+r' = m-1\\k\ge 0,\;1\le r'\le N}}  T_u(\l)^{2k+1} (\cL_0 - \l^2)^{-1} (\cL_0  R_{N,r'}^{d})(x,\l^{-2}D) \\ 
& \quad - \frac{1}{\l^{1+2N}} \sum_{\substack{k+r'=m-1\\ k\ge 0,\;0\le r'\le N}} T_u(\l)^{2k} (\cL_0 - \l^2)^{-1} R_{N,r'}^a(x,\l^{-2}D) (\l^{-2}\cL_0 + 1).
\end{split}
\end{equation}

\subsection{Extracting the $\mu_{j,m}(u)$'s}
\label{ss:extractmuj}

Combining~\eqref{e:R2m-1} with~\eqref{e:TuversusR},~\eqref{e:truncatedexp}, and pulling out inverse powers of~$\l$, we easily find the following formula for~$\tr T_u^{2m}(\l)$  with $m\ge 2$, truncated at~$N = 2L$.
\begin{small}
	\begin{equation}
	\label{e:decompstep2}
	\begin{split}
	\tr(T_u^{2m}(\l)) = & \; \sum_{j=m-1}^{2L-1} \frac{1}{\l^{2j+2}} \tr[iU  R_{j,m-1}^a(x,\l^{-2} D)]\\ 
	& + \sum_{\substack{k+r = m-1\\ k\ge 0,\;1 \le r \le 2L}} \frac{(-1)^k}{\l^{4L + 4 + 2k}}  \tr\big[(U(\l^{-2}\cL_0-1)^{-1})^{2k+2}  (\cL_0 R_{2L,r}^d)(x,\l^{-2}D)\big] \\
	& + \sum_{\substack{k+r = m-1 \\ k\ge 0, \;0\le r \le 2L}} \frac{i(-1)^{k+1}}{\l^{4L + 2+ 2k}} \tr\big[ (U(\l^{-2}\cL_0 - 1)^{-1})^{2k+1}  R_{2L,r}^a(x,\l^{-2}D)(\l^{-2}\cL_0 + 1)\big].
	\end{split}
	\end{equation}
\end{small}
We will refer to the three sums above as~$I$,~$II$, and~$III$, respectively. 

We now  identify the coefficients~$\mu_{j,m}(u)$'s and verify that they satisfy the properties claimed in Lemma~\ref{l:trdecomp}.  The claimed homogeneity properties will be clear from the formulas that we derive below; we will just need to verify the bounds ~\eqref{e:mu2lbd}.  The latter are also straightforward to verify but will require us to use the structure of the~$R^d_{k,r}$'s and~$R^a_{k,r}$'s from~\eqref{e:Rdkr}--\eqref{e:Rakr}.  

The first sum has the form
$$-2m\sum_{j=m-1}^{2L-1} \frac{\mu_{j,m}(u)}{\l^{2j}} $$
with
\begin{equation}
\label{e:mujkall}
\begin{split}
\mu_{j,m}(u)
& = -\frac{i}{2m\l^2} \tr \big[ UR_{j,m-1}^a(x,\l^{-2}D) \big]  \\
& = -\frac{i}{4m\pi} \sum_{\substack{\g\in \N^{2m-1} \\ |\g| = j - (m-1)}} \tr \left[ \int U(x) Q_\g(x)\dx \int \frac{P_{|\g|}(\tfrac{\zeta}{\l^2})}{((\tfrac{\zeta}{\l^2})^2 - 1)^{j+1}} \frac{\dd\zeta}{\lambda^2} \right] .
\end{split}
\end{equation}

Note that~`$\tr$' denotes an operator trace in the first line, whereas it refers to the~$2\times 2$ matrix trace in the second and third lines.  We will use the notation~`$\tr$' similarly in what follows without further comment.  
  
Since~$\l$ is presumed to lie in~$\Gamma_\d$, a comparison of the degrees in the numerator and denominator ensures that the integrals over~$\zeta$ are finite and their values are independent of $\lambda$.  The total number of derivatives in the~$x$-integrals is~$j-(m-1)$, and we distribute these so that the highest order of the derivatives that fall on a single~$U$ is as small as possible. We list the bounds on~$\mu_{j,m}(u)$ according to~$m$ and the parity of~$j$.  In each case below,~$\ell$ is a strictly positive integer.
\begin{itemize}
	\item If~$j=2\ell$ is even and~$m=2$, then there are~$2\ell-1$ derivatives; thus 
	\[
	|\mu_{2\ell,2}(u)|\lesssim \|u\|_{H^\ell(\R)} \|u\|_{H^{\s(\ell-1)}(\R)}^3,
	\]
	where we recall the notation $\s(n) = \max\{n, \frac13\}$.
	\item If~$j = 2\ell$ is even and~$m\ge 3$, then there are at most~$2(\ell-1)$ total derivatives.  This establishes the following bounds: 
	\[
	\begin{split}
	|\mu_{2,3}(u)| & \lesssim \|u\|_{H^{\frac13}(\R)}^{6},\\
	|\mu_{2\ell,m}(u)| & \lesssim \|u\|_{H^{\ell-1}(\R)}^{2m}, \qquad \ell\ge 2, \;m\in \{3, \ldots, 2\ell + 1\}.
	\end{split}
	\]
	\item If~$j = 2\ell+1$ is odd and~$m\ge 2$, then there are at most~$2 \ell$ derivatives, so~$|\mu_{2\ell+1,m}(u)|\lesssim \|u\|_{H^\ell(\R)}^{2m}$. 
\end{itemize}

Let us remark that the  formula~\eqref{e:mujkall} determines $\mu_{j,m}(u)$ for all $m\ge 2$ and all $j\ge m-1$, not just for those $\mu_{j,m}(u)$'s that appear in the sum $I$.  Therefore, the above bounds on the $\mu_{j,m}(u)$'s complete our proof of the estimates \eqref{e:mu2lbd}.  However, in order to determine the remainders~$\t_{L}^m(u,\l)$, we still need to extract~$\mu_{2L,m}(u)$ and $\mu_{2L+1,m}(u)$ from the sums~$II$ and~$III$.  To this end, we remove the parts of~$II$ and~$III$ which are of order $\l^{-4L}$ and $\l^{-4L-2}$; these will correspond to~$\mu_{2L,m}(u)$ and~$\mu_{2L+1,m}(u)$, respectively.  We deal first with~$\mu_{2L,m}(u)$; the only term expected to be relevant is the~$k=0$ term in~$III$, namely 
\begin{equation}
\label{e:IIIk=0}
-\frac{i}{\l^{4L+2}}\tr\big[ U(\l^{-2}\cL_0 - 1)^{-1} R_{2L,m-1}^a(x,\l^{-2}D)(\l^{-2}\cL_0 + 1)\big].
\end{equation}
To extract the part of this expression that is really of order~$\l^{-4L}$, we commute the operator~$(\l^{-2}\cL_0 - 1)^{-1}$ with~$R_{2L,m-1}^a(x,\l^{-2}D)$.
We will have to do something similar several times below, so let us pause to write down a more general formula.  Let~$A$ denote an antidiagonal operator with symbol~$A(x,\zeta)$ and similarly let~$B$ denote a diagonal operator with symbol~$B(x,\zeta)$.  Then a simple application of the product rule gives the following operator identities.

 \begin{align}
\label{e:Acommute}
A(\cL_0 +\l^2)^{-1} & = -(\cL_0 -\l^2)^{-1}A + (\cL_0 -\l^2)^{-1}(\cL_0 A)(\cL_0 +\l^2)^{-1}; \\
\label{e:Dcommute}
B(\cL_0 -\l^2)^{-1} & = \;\;\,(\cL_0 -\l^2)^{-1}B + (\cL_0 -\l^2)^{-1}(\cL_0 B)(\cL_0 -\l^2)^{-1}. 
\end{align}
Using~\eqref{e:Acommute} with~$A = R_{2L,m-1}^a$, the expression~\eqref{e:IIIk=0} becomes 
\begin{equation}
\label{e:IIIk=0'}
\begin{split}
\underbrace{\frac{i}{\l^{4L+2}} \tr\big[ U R_{2L,m-1}^a(x,\l^{-2}D) \big]}_{-2m\mu_{2L, m}\l^{-4L}} - \frac{i}{\l^{4L+4}}\tr\big[ U(\l^{-2}\cL_0 -1)^{-1} (\cL_0R_{2L,m-1}^a)(x,\l^{-2}D)\big].
\end{split}
\end{equation}

\medskip
To extract~$\mu_{2L+1,m}(u)$, we need to determine the part of~$II$ and~$III$ that is of order~$2L+1$ in~$\l^{-2}$.  There are three quantities we need to consider:
\begin{itemize}
	\item The second term in~\eqref{e:IIIk=0'}:
	\begin{equation}
	\label{e:IIIk=0''}
	-\frac{i}{\l^{4L+4}}\tr\big[  U(\l^{-2}\cL_0 -1)^{-1}( \cL_0R_{2L,m-1}^a)(x,\l^{-2}D)\big]
	\end{equation}
	\item The~$k=0$ term in~$II$:
	\begin{equation}
	\label{e:IIk=0}
	\frac{1}{\l^{4L + 4}}  \tr\big[(U(\l^{-2}\cL_0-1)^{-1})^{2}  (\cL_0 R_{2L,m-1}^d)(x,\l^{-2}D)\big]
	\end{equation}
	\item The~$k=1$ term in~$III$:
	\begin{equation}
	\label{e:IIIk=1}
	\frac{i}{\l^{4L + 4}} \tr\big[ (U(\l^{-2}\cL_0 - 1)^{-1})^{3}  R_{2L,m-2}^a(x,\l^{-2}D)(\l^{-2}\cL_0 + 1)\big].
	\end{equation}
\end{itemize}
We deal with each of these in turn, denoting their contributions to~$\mu_{2L+1,m}$ by~$\mu_{2L+1,m}^{(1)}$,~$\mu_{2L+1,m}^{(2)}$, and~$\mu_{2L+1,m}^{(3)}$, respectively.    To put~\eqref{e:IIIk=0''} in the desired form, we simply apply~\eqref{e:Acommute} again, this time with~$A = \cL_0R_{2L,m-1}^a$.  The result is
\begin{equation}
\label{e:2L+1(i)}
\begin{split}
& \frac{i}{\l^{4L+4}}\tr\big[ U(\cL_0 R_{2L,m-1}^a)(x,\l^{-2}D) (\l^{-2}\cL_0 + 1)^{-1} \big] \\
& - \frac{i}{\l^{4L+6}}\tr\big[ U(\l^{-2}\cL_0 -1)^{-1} (D^2R_{2L,m-1}^a)(x,\l^{-2}D) (\l^{-2}\cL_0 + 1)^{-1}\big].
\end{split}
\end{equation}
Thus 
\begin{align*}
\mu_{2L+1,m}^{(1)}  = -\frac{i}{2m\l^2}\tr\big[ U(\cL_0 R_{2L,m-1}^a)(x,\l^{-2}D) (\l^{-2}\cL_0 + 1)^{-1} \big] .
\end{align*}

\medskip
Next, we look at~\eqref{e:IIk=0}.  We perform two commutations, using~$A = U$ in~\eqref{e:Acommute}, then~$B = U^2$ in~\eqref{e:Dcommute} to obtain
\begin{equation}
\label{e:commutetwice}
\begin{split}
[U(\l^{-2}\cL_0 - 1)^{-1}]^2 
& = -(\l^{-4} D^2 - 1)^{-1} U^2 \\
& \qquad + \l^{-2}(\l^{-2} \cL_0 + 1)^{-1} (\cL_0 U)(\l^{-2} \cL_0 - 1)^{-1} U (\l^{-2} \cL_0 - 1)^{-1} \\
& \qquad - \l^{-2} (\l^{-4}D^2 - 1)^{-1} (\cL_0 U^2)(\l^{-2}\cL_0 - 1)^{-1}.
\end{split}
\end{equation}
Substituting this into~\eqref{e:IIk=0} yields 
\begin{equation}
\label{e:2L+1(ii)}
\begin{split}
& -\frac{1}{\l^{4L + 4}}  \tr\big[ U^2 (\cL_0 R_{2L,m-1}^d)(x,\l^{-2}D)(\l^{-4} D^2 - 1)^{-1}\big] \\
& +\frac{1}{\l^{4L + 6}}  \tr\big[(\l^{-2} \cL_0 + 1)^{-1} (\cL_0 U)(\l^{-2} \cL_0 - 1)^{-1} U (\l^{-2} \cL_0 - 1)^{-1} (\cL_0 R_{2L,m-1}^d)(x,\l^{-2}D)\big] \\
& -\frac{1}{\l^{4L + 6}}  \tr\big[(\l^{-4}D^2 - 1)^{-1}(\cL_0 U^2)(\l^{-2}\cL_0 - 1)^{-1}  (\cL_0 R_{2L,m-1}^d)(x,\l^{-2}D)\big].
\end{split}
\end{equation}
We take 
\begin{align*}
\mu_{2L+1,m}^{(2)}
= \frac{1}{2m\l^2} \tr\big[ U^2 (\cL_0 R_{2L,m-1}^d)(x,\l^{-2}D)(\l^{-4} D^2 - 1)^{-1}\big].
\end{align*}

\medskip
Finally, we look at~\eqref{e:IIIk=1}.  
Proceeding as in~\eqref{e:commutetwice} but commuting one more time, we get
\begin{align*}
(\l^{-2}\cL_0 +1) [U(\l^{-2}\cL_0 - 1)^{-1}]^3 
& = (\l^{-4}D^2 - 1)^{-1} U^3 \\
& \qquad + \l^{-2} (\cL_0 U) (\l^{-2} \cL_0 - 1)^{-1} U (\l^{-2} \cL_0 - 1)^{-1} U (\l^{-2}\cL_0 - 1)^{-1}\\
& \qquad - \l^{-2} (\l^{-2}\cL_0 - 1)^{-1}  (\cL_0 U^2) (\l^{-2} \cL_0 - 1)^{-1} U(\l^{-2}\cL_0 - 1)^{-1} \\
& \qquad - \l^{-2} (\l^{-4}D^2 - 1)^{-1} (\cL_0 U^3) (\l^{-2} \cL_0 - 1)^{-1}.
\end{align*}

Substituting the above into~\eqref{e:IIIk=1} yields 
\begin{equation}
\label{e:2L+1(iii)}
\begin{split}
& \frac{i}{\l^{4L + 4}} \tr\big[  U^3  R_{2L,m-2}^a(x,\l^{-2}D)(\l^{-4} D^2 - 1)^{-1} \big] \\
& +\frac{i}{\l^{4L + 6}} \tr\big[ (\cL_0 U) (\l^{-2} \cL_0 - 1)^{-1} U (\l^{-2} \cL_0 - 1)^{-1}U (\l^{-2} \cL_0 - 1)^{-1} R_{2L,m-2}^a(x,\l^{-2}D)\big] \\
& -\frac{i}{\l^{4L + 6}} \tr\big[ (\l^{-2}\cL_0 - 1)^{-1}  (\cL_0 U^2) (\l^{-2} \cL_0 - 1)^{-1} U(\l^{-2}\cL_0 - 1)^{-1} R_{2L,m-2}^a(x,\l^{-2}D) \big] \\
& -\frac{i}{\l^{4L + 6}} \tr\big[  (\l^{-4} D^2 - 1)^{-1} (\cL_0 U^3) (\l^{-2} \cL_0 - 1)^{-1}  R_{2L,m-2}^a(x,\l^{-2}D) \big].
\end{split}
\end{equation}
Thus

\begin{align*}
\mu_{2L+1,m}^{(3)} = -\frac{i}{2m\l^2} \tr\big[ U^3  R_{2L,m-2}^a(x,\l^{-2}D)(\l^{-4} D^2 - 1)^{-1} \big].
\end{align*}

A short calculation involving \eqref{e:newrecursivea} confirms that the three quantities identified above sum to $\mu_{2L+1,m}$ as defined in \eqref{e:mujkall}:
$$\mu_{2L+1,m}^{(1)}+\mu_{2L+1,m}^{(2)}+\mu_{2L+1,m}^{(3)}=\mu_{2L+1,m}.$$

\subsection{Estimating the Remainder}

\label{ss:estrem}

The final step of the proof is to estimate the remainder terms, which we group together into~$\t^{m}_{L}(u,\l)$.  This expression is a sum of the following terms:
\begin{itemize}
	\item The~$k\ge 1$ terms of~$II$ and the~$k\ge 2$ terms of~$III$, where~$II$ and~$III$ denote (as above) the second and third sums in the decomposition~\eqref{e:decompstep2}.  We refer to these as the `Type 1' remainder terms.  
	\item The terms in~\eqref{e:2L+1(i)},~\eqref{e:2L+1(ii)}, and~\eqref{e:2L+1(iii)} where~$\l^{-4L-6}$ appears (six terms total).  We refer to these as the `Type 2' remainder terms.  
\end{itemize}

\subsubsection{Type 1 Remainder Terms}
We begin with the two sums.  We want to show that the following expression is bounded by~$\|u\|_{H^L(\R)}^{2m}$:
\begin{equation}
\label{e:remsums}
\begin{split}
& \sum_{\substack{k+r = m-1\\ k\ge 1,\;1 \le r \le 2L}} \frac{(-1)^k}{\l^{2k}}  \tr\big[(U(\l^{-2}\cL_0-1)^{-1})^{2k+2}  (\cL_0 R_{2L,r}^d)(x,\l^{-2}D)\big] \\
& + \sum_{\substack{k+r = m-1 \\ k\ge 2, \;0\le r \le 2L}} \frac{i(-1)^{k+1}}{\l^{2(k-1)}} \tr\big[ (U(\l^{-2}\cL_0 - 1)^{-1})^{2k+1} R_{2L,r}^a(x,\l^{-2}D)(\l^{-2}\cL_0 + 1)\big].
\end{split}
\end{equation}
By virtue of \eqref{e:Rdkr}, we can write
\begin{align*}
(\cL_0 R^d_{2L,r})(x,p)  = \frac{1}{(p^2 - 1)^{2L+1}}\sum_{\substack{\g\in \N^{2r} \\ |\g| = 2L-r+1}}Q_\g(x) P_{|\g|}(p).
\end{align*}
Thus, the traces in the first sum in~\eqref{e:remsums} may be written as a sum of terms of the form
\begin{align}\label{v}
 \tr\big[(U(\l^{-2}\cL_0-1)^{-1})^{2k+2} Q_\g(x) P_{2L-r+1}(\l^{-2}D)(\l^{-4}D^2 - 1)^{-2L-1} \big]
\end{align}
with $\g\in \N^{2r},\,\,|\g| = 2L-r+1\leq 2L$.
Integrating by parts repeatedly in the above expression until no derivative of order larger than~$L$ falls on any single~$U$, we rewrite the expression \eqref{v} as a sum of terms of the form
\[
\tr\big[(\partial_x^{\eta_1}U)(x)(\l^{-2}\cL_0-1)^{-1}\dots(\partial_x^{\eta_{2k+2}}U)(x)(\l^{-2}\cL_0-1)^{-1}Q_\g(x) P_{2L-r+1}(\l^{-2}D)(\l^{-4}D^2 - 1)^{-2L-1} \big],
\]
with $\eta=(\eta_1, \dots\eta_{2k+2})\in \N^{2k+2}$, $\g=(\g_1, \dots , \g_{2r})\in \N^{2r}$ satisfying $|\eta|+|\g|=2L-r+1$ and~$\max\limits_{p,q}(\eta_p, \g_q)\leq L$. Thus, the expression \eqref{v} can be bounded by $|\l|^2\|u\|_{H^L(\R)}^{2m}$, and therefore the first sum in \eqref{e:remsums} by $\|u\|_{H^L(\R)}^{2m}$.
 
We deal with the second sum in~\eqref{e:remsums} in essentially the same way.  Invoking \eqref{e:Rakr}, we may write 
\[
R^a_{2L,r}(x,\l^{-2}D)(\l^{-2}\cL_0 + 1)=
 \sum_{\substack{\g\in \N^{2r+1} \\ |\g| = 2L-r}}Q_{\g}(x)P_{|\g|+1}(\l^{-2}D) (\l^{-4}D^2-1)^{-2L-1}.
\]
Thus
\[
\tr\big[ (U(\l^{-2}\cL_0 - 1)^{-1})^{2k+1} R_{2L,r}^a(x,\l^{-2}D)(\l^{-2}\cL_0 + 1)\big]
\]
is a sum of terms of the form 
\begin{equation}
\label{e:sum2preIBP}
\tr\big[ (U(\l^{-2}\cL_0 - 1)^{-1})^{2k+1} Q_{\g}(x)P_{2L-r+1}(\l^{-2}D) (\l^{-4}D^2-1)^{-2L-1} \big],
\end{equation}
where~$\g\in \N^{2r+1}$ with~$|\g|=2L-r\leq 2L$. As before, we integrate by parts repeatedly to rewrite the expression \eqref{e:sum2preIBP} as a sum of terms of the form
$$\tr\big[(\partial_x^{\eta_1}U)(x)(\l^{-2}\cL_0-1)^{-1}\dots(\partial_x^{\eta_{2k+1}}U)(x)(\l^{-2}\cL_0-1)^{-1}Q_\g(x) P_{2L-r+1}(\l^{-2}D)(\l^{-4}D^2 - 1)^{-2L-1} \big],$$
with $\eta=(\eta_1, \dots\eta_{2k+1})\in \N^{2k+1}$, $\g=(\g_1, \dots , \g_{2r+1})\in \N^{2r+1}$ satisfying
$|\eta|+|\g|=2L-r$ and~$\max\limits_{p,q}(\eta_p, \g_q)\leq L$. This allows us to bound \eqref{e:sum2preIBP} by $|\l|^2\|u\|_{H^L(\R)}^{2m}$, thus completing the desired estimates on the Type 1 remainder terms.

\subsubsection{Type 2 Remainder Terms}

We now deal with the Type~2 remainder terms (the terms in~\eqref{e:2L+1(i)},~\eqref{e:2L+1(ii)}, and~\eqref{e:2L+1(iii)} where~$\l^{-4L-6}$ appears).  When~$m\ge 3$, the total number of derivatives falling on the~$U$'s is~$2L$; therefore we can bound all these terms by~$|\l|^{-4L -4}\|u\|_{H^L(\R)}^{2m}$ by arguing exactly as we did for the Type~1 terms.  To complete the proof of  Lemma~\ref{l:trdecomp}, it thus remains to consider the Type~2 remainder terms with~$m = 2$. In this case, some of the~$U$'s appear to be overloaded with derivatives, and we need an additional estimate.  We state the following Lemma in terms of the `overloaded' part of the Type~2 remainder term from~\eqref{e:2L+1(i)}, but the same manipulations will yield the bound we need for the other Type~2 remainders.   

\begin{LEMMA}
	\label{l:overload}
	The following estimate holds, for any~$j\in \N$ and all~$\a\in [0,1]$.
	\begin{equation}
	\|(\cL_0 +\l^2)^{-1} D^{j+1} U(\cL_0 - \l^2)^{-1} \|_2 \le C(\a) |\l|^{-1-2\a} \|u\|_{H^{j + \a}(\R)}. 
	\end{equation}
\end{LEMMA}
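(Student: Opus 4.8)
The plan is to compute the Hilbert--Schmidt norm directly from the integral kernel of the operator after conjugating by the Fourier transform. Write $A = (\cL_0+\l^2)^{-1}D^{j+1}U(\cL_0-\l^2)^{-1}$ and recall $\cL_0 = i\s_3\p_x$. Since $\cL_0$ and $D^{j+1}$ are diagonal Fourier multipliers while $D^{j+1}U$ acts by multiplication by an antidiagonal matrix, the operator $A$ is antidiagonal, so $\|A\|_2^2 = \|A_{12}\|_2^2 + \|A_{21}\|_2^2$, where $A_{12},A_{21}$ are the two nonzero entries of its operator matrix. The second is handled by the same computation, now with $\overline{u}$ in place of $u$, so I would only treat $A_{12}$, which has the form $\op(a)\,M_v\,\op(b)$ with $M_v$ the operator of multiplication by $v = D^{j+1}u$ and $a(\zeta)=(\zeta+\l^2)^{-1}$, $b(\eta)=(-\eta-\l^2)^{-1}$. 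On the Fourier side such an operator has integral kernel $(2\pi)^{-1/2}a(\zeta)\widehat{v}(\zeta-\eta)b(\eta)$; since the Hilbert--Schmidt norm of an integral operator equals the $L^2$ norm of its kernel, using $|\widehat{v}(\xi)|=|\xi|^{j+1}|\widehat{u}(\xi)|$ and the substitution $\xi=\zeta-\eta$ gives
\begin{equation*}
\begin{split}
\|A_{12}\|_2^2 &= \frac{1}{2\pi}\iint_{\R^2}\frac{|\widehat{v}(\zeta-\eta)|^2}{|\zeta+\l^2|^2\,|\eta+\l^2|^2}\,d\zeta\,d\eta \\
&= \frac{1}{2\pi}\int_\R |\xi|^{2(j+1)}|\widehat{u}(\xi)|^2\left(\int_\R\frac{d\eta}{|\eta+\xi+\l^2|^2\,|\eta+\l^2|^2}\right)d\xi.
\end{split}
\end{equation*}

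The inner $\eta$-integral is the crux, and I would evaluate it rather than bound the resolvent symbols pointwise by $(\im\l^2)^{-1}$; the latter is far too lossy, and in fact fails to even give finiteness once the factor $|\xi|^{2(j+1)}$ is present. The point is that $|\eta+\l^2|^{-2}$ and $|\eta+\xi+\l^2|^{-2}$ have their poles in opposite half-planes, so a residue computation (equivalently, splitting $\R$ into the regions where $|\eta+\re\l^2|\le|\xi|/2$, where $|\eta+\xi+\re\l^2|\le|\xi|/2$, and the complement of both) yields
\[
\int_\R\frac{d\eta}{|\eta+\xi+\l^2|^2\,|\eta+\l^2|^2}=\frac{2\pi}{(\im\l^2)\big(\xi^2+4(\im\l^2)^2\big)}\lesssim\frac{1}{(\im\l^2)\big(\xi^2+(\im\l^2)^2\big)}.
\]
This is the only step I expect to require real care; everything on either side of it is routine.

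It then remains to estimate $(\im\l^2)^{-1}\int_\R|\xi|^{2(j+1)}\big(\xi^2+(\im\l^2)^2\big)^{-1}|\widehat{u}(\xi)|^2\,d\xi$. Splitting $|\xi|^{2(j+1)}=|\xi|^{2(j+\a)}\cdot|\xi|^{2(1-\a)}$ and using the weighted arithmetic--geometric mean inequality $\xi^2+(\im\l^2)^2\ge|\xi|^{2(1-\a)}(\im\l^2)^{2\a}$ for $\a\in[0,1]$, one gets $|\xi|^{2(1-\a)}\big(\xi^2+(\im\l^2)^2\big)^{-1}\le(\im\l^2)^{-2\a}$, hence
\[
\|A_{12}\|_2^2\lesssim_\a\frac{1}{(\im\l^2)^{1+2\a}}\int_\R|\xi|^{2(j+\a)}|\widehat{u}(\xi)|^2\,d\xi\le\frac{\|u\|_{H^{j+\a}(\R)}^2}{(\im\l^2)^{1+2\a}}.
\]
Finally, for $\l\in\Gamma_\d$ one has $\im\l^2=|\l|^2\sin(\arg\l^2)\ge|\l|^2\sin\d$, so $(\im\l^2)^{-1-2\a}\lesssim_\d|\l|^{-2-4\a}$; combined with $\|A\|_2^2=\|A_{12}\|_2^2+\|A_{21}\|_2^2$ this gives $\|A\|_2\le C(\a)|\l|^{-1-2\a}\|u\|_{H^{j+\a}(\R)}$, with the dependence on $\d$ suppressed as elsewhere.
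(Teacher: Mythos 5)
Your proposal is correct and follows essentially the same route as the paper: both compute the Hilbert--Schmidt norm from the Fourier-side kernel, evaluate the inner resolvent integral exactly as $\frac{2\pi}{(\im\l^2)\,(\xi^2+4(\im\l^2)^2)}$, and then split $|\xi|^{2(j+1)}=|\xi|^{2(j+\a)}|\xi|^{2(1-\a)}$ to trade the remaining decay for $(\im\l^2)^{-2\a}\lesssim_\d|\l|^{-4\a}$. The only cosmetic difference is that you track the two antidiagonal entries separately, whereas the paper absorbs both into a single double integral with prefactor $1/\pi$.
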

\begin{proof}
Denoting~$T = (\cL_0 +\l^2)^{-1}D^{j+1}U(\cL_0 - \l^2)^{-1}$, we readily compute as follows:
\begin{small}
	\begin{align*}
	\|T\|_2^2
&= \frac{1}{\pi}  \iint \frac{|\widehat{D^{j+1}u}(\zeta_1 - \zeta_2)|^2 }
	{|\zeta_1 - \l^2|^{2} |\zeta_2 - \l^2|^{2}}\dd\zeta_1\dd\zeta_2\\
	& = \frac{1}{\pi} \int |\zeta_1|^2 |\widehat{D^j u}(\zeta_1)|^2 \left(  \int \frac{\dd\zeta_2 }{|\zeta_1 + \zeta_2 - \l^2|^2 |\zeta_2 - \l^2|^2} \right) \dd\zeta_1 \\
	& = \frac{2}{\im \l^2} \int \frac{|\zeta_1|^2 |\widehat{D^j u}(\zeta_1)|^2}{|\zeta_1 + 2i \im \l^2|^2} \dd\zeta_1 
	 \le C(\a) |\l|^{-2-4\a} \|u\|_{H^{j+\a}(\R)}^2.
	\end{align*}
	\end{small}
	This completes the proof.	
\end{proof}

With the above Lemma at our disposal, we now return to the estimation of the Type~$2$ remainder terms for $m=2$; we deal first with the one that appears in~\eqref{e:2L+1(i)}. Omitting the prefactor $\frac{-i}{\l^{4L+6}}$, the quantity under consideration is
$$
\tr\big[ U(\l^{-2}\cL_0 -1)^{-1} (D^2R_{2L,1}^a)(x,\l^{-2}D) (\l^{-2}\cL_0 + 1)^{-1}\big],
$$ 
which we write as
\begin{equation}\label{i}
\tr\big[ (\l^{-2}\cL_0 +1)^{-1}(D^2 U)(\l^{-2}\cL_0 -1)^{-1} R_{2L,1}^a(x,\l^{-2}D)\big].\end{equation}
Proceeding as we did for the Type 1 terms, 
we rewrite this expression as a sum of terms of the form
\begin{equation}
\label{e:type2term1}
\tr\big[ (\l^{-2}\cL_0 +1)^{-1}(D^{\eta+2} U)(\l^{-2}\cL_0 -1)^{-1} Q_\g(x) P_{2L-1}(\l^{-2}D)(\l^{-4}D^2 - 1)^{-2L-1}],
\end{equation}
with $\eta\in \N$, $\g=(\g_1, \g_2, \g_3)\in \N^{3}$, $\eta+|\g|=2L-1$, $\eta\leq L-1$, and $\max(\g_1, \g_2, \g_3)\leq L$. The expression \eqref{e:type2term1} can be bounded by
\begin{align*}
\| (\l^{-2}\cL_0 +1)^{-1}(D^{\eta + 2} U)(\l^{-2}\cL_0 -1)^{-1} \|_2 \|Q_\g(x) P_{2L-1}(\l^{-2}D)(\l^{-4}D^2 - 1)^{-2L-1}\|_2.
\end{align*}
By virtue of Lemma \ref{l:overload}, the above can bounded in turn by 
\begin{align*}
C(\a) |\l|^{4-2\alpha}\|u\|_{H^{L+\a}(\R)} \|u\|_{H^L(\R)}^{3},
\end{align*}
for any $\alpha\in [0, 1]$.

The next quantity we treat is the third term in~\eqref{e:2L+1(ii)}; we want to estimate 
\begin{equation*}
\tr\big[(\l^{-2}\cL_0 - 1)^{-1}(\cL_0 U^2)(\l^{-2}\cL_0 - 1)^{-1}  (\cL_0 R_{2L,1}^d)(x,\l^{-2}D)(\l^{-2}\cL_0 + 1)^{-1}\big].
\end{equation*}
After an integration by parts we are left with the expression
\[
-\tr\big[(\l^{-2}\cL_0 - 1)^{-1}(D^2 U^2)(\l^{-2}\cL_0 - 1)^{-1}  R_{2L,1}^d(x,\l^{-2}D)(\l^{-2}\cL_0 + 1)^{-1}\big].
\]
that can be
treated in exactly the same way as \eqref{i}.  We note only the modification to Lemma~\ref{l:overload} that we use, namely 
\[
\| (\cL_0 - \l^2)^{-1}(D^{L+1} U^2)(\cL_0 - \l^2)^{-1}\|_2 \lesssim_\a |\l|^{-1-2\a} \|U^2\|_{H^{L+\a}(\R)} \lesssim_\a |\l|^{-1-2\a} \|u\|_{H^{L+\a}(\R)} \|u\|_{H^L(\R)}.
\]

We next consider the second term in~\eqref{e:2L+1(ii)}:
\[
 \tr\big[(\l^{-2} \cL_0 + 1)^{-1} (\cL_0 U)(\l^{-2} \cL_0 - 1)^{-1} U (\l^{-2} \cL_0 - 1)^{-1} 
(\cL_0 R_{2L,1}^d)(x,\l^{-2}D)\big] ,\]
where as usual we have suppressed  the prefactor~$\l^{-4L-6}$.
We start by rewriting it as the sum
\begin{equation}
\label{vvv}\begin{split}
-&\tr\big[(\l^{-2} \cL_0 + 1)^{-1} (D^2U)(\l^{-2} \cL_0 - 1)^{-1} U (\l^{-2} \cL_0 - 1)^{-1} 
 R_{2L,1}^d(x,\l^{-2}D)\big]\\
&+\tr\big[(\l^{-2} \cL_0 + 1)^{-1} (\cL_0 U)(\l^{-2} \cL_0 - 1)^{-1}(\cL_0 U) (\l^{-2} \cL_0 - 1)^{-1} 
 R_{2L,1}^d(x,\l^{-2}D)\big].
 \end{split}
\end{equation}
For the first term here we proceed exactly as before:  substituting \eqref{e:Rdkr} and integrating by parts we  rewrite it as a sum of expressions  of the form
$$\tr\big[ (\l^{-2}\cL_0 +1)^{-1}(D^{\eta_1+2} U)(\l^{-2}\cL_0 -1)^{-1}  (D^{\eta_2}U) (\l^{-2} \cL_0 - 1)^{-1} Q_\g(x) P_{2L}(\l^{-2}D)(\l^{-4}D^2 - 1)^{-2L-1}],$$
with~$\eta=( \eta_1, \eta_2)\in \N^2$,~$\g=(\g_1, \g_2)\in \N^{2}$,~$|\eta|+|\g|=2L-1$,~$\max(\eta_1, \eta_2)\leq L-1$, and~$\max(\g_1, \g_2)\leq~L$. We estimate  the above by  
\begin{align*}
\| (\l^{-2}\cL_0 +1)^{-1}(D^{\eta_1 + 2} U)(\l^{-2}\cL_0 -1)^{-1} \|_2\|  (D^{\eta_2}U) (\l^{-2} \cL_0 - 1)^{-1}\| \|Q_\g(x) P_{2L}(\l^{-2}D)(\l^{-4}D^2 - 1)^{-2L-1}\|_2,
\end{align*}
which can in turn be bounded by 
\begin{align*}
C(\a) |\l|^{4-2\alpha}\|u\|_{H^{L+\a}(\R)} \|u\|_{H^L(\R)}^{3}.
\end{align*}

To treat the second term in \eqref{vvv} we distinguish the cases~$L=1$ and~$L\geq 2$. In the case of~$L=1$ we estimate this expression by
\begin{align*}
&\| (\l^{-2}\cL_0 +1)^{-1}(\cL_0 U)\|_2  \|(\l^{-2} \cL_0 - 1)^{-1}(\cL_0 U)\| \|(\l^{-2} \cL_0 - 1)^{-1} R_{2L,1}^d(x,\l^{-2}D)\|_2\\
&\lesssim|\l|^{2+\frac2p}\|u\|_{H^1(\R)}^3\|Du\|_{L^p(\R)}, \quad 2\leq p\leq \infty.
\end{align*}
Putting~$\s = \frac{\a}{2}\in [0, \frac12[$ and choosing~$p$ such that~$\s = \frac12 - \frac1p$, we get the bound
 $$|\l|^{3 - 2\s} \|u\|_{H^1(\R)}^3\|u\|_{H^{1+\s}(\R)} \le |\l|^{4-2\a} \|u\|_{H^1(\R)}^3\|u\|_{H^{1 + \a}(\R)}.$$

If~$L\geq2$, we can proceed as for the Type 1 remainder terms and bound the second term in \eqref{vvv} by~$|\l|^2\|u\|_{H^L(\R)}^{4}$.
This finishes our considerations of Type 2 remainder terms coming from~\eqref{e:2L+1(ii)}. 

The last group of Type~2 remainder terms comes from~\eqref{e:2L+1(iii)}.  Omitting the common prefactor, the quantity of interest is 
\begin{equation}
\begin{split}
& \tr\big[ (\cL_0 U) (\l^{-2} \cL_0 - 1)^{-1} U (\l^{-2} \cL_0 - 1)^{-1}U (\l^{-2} \cL_0 - 1)^{-1} R_{2L,0}^a(x,\l^{-2}D)\big] \\
& -\tr\big[ (\l^{-2}\cL_0 - 1)^{-1}  (\cL_0 U^2) (\l^{-2} \cL_0 - 1)^{-1} U(\l^{-2}\cL_0 - 1)^{-1}  R_{2L,0}^a(x,\l^{-2}D) \big] \\
& -\tr\big[  (\l^{-4} D^2 - 1)^{-1} (\cL_0 U^3) (\l^{-2} \cL_0 - 1)^{-1} R_{2L,0}^a(x,\l^{-2}D) \big],
\end{split}
\end{equation}
where $R_{2L,0}^a(x,p)=\partial_x^{2L}U(x)P_{2L}(p)(p^2-1)^{-2L-1}$. No new ideas are involved in the estimation of these terms; we simply integrate by parts $L-1$ times to keep $L+1$ derivatives on $U$ coming from $R_{2L,0}^a$ and then apply Lemma~\ref{l:overload}. We omit the remaining details for these terms. Having now established the required bounds on the remainder~$|\t_L^{m}(u,\l)|$, we have completed the proof of Lemma~\ref{l:trdecomp}, modulo the proof of Lemma~\ref{l:Rkform} below.

\subsection{Proof of Lemma \ref{l:Rkform} }

\label{ss:induction}

We argue by induction.  The base cases are easy to verify explicitly:
\[
R_0^a(x,p) = \frac{1}{p^2-1}\cdot U(-i) = R_{0,0}^a(x,p) 
\]
\[
R_1^d(x,p) = \frac{1}{(p^2-1)^2}\cdot U^2\cdot (-1)\cdot (p\s_3-1) = R_{1,1}^d(x,p).
\]

Using~\eqref{e:newrecursived}--\eqref{e:newrecursivea} along with our inductive hypothesis, we may write, for~$k\ge 1$,
\begin{align*}
R_k^d(x,p)
& = \frac{1}{(p^2-1)^{k+1}} \bigg[ \sum_{r=0}^{k-1} \sum_{\substack{\g \in \N^{2r+1} \\ |\g| = (k-1)-r}} -iU Q_{\g}(x)P_{|\g|}(p) \\
& \hspace{25 mm} + \sum_{r=1}^{k-1} \sum_{\substack{\g\in \N^{2r} \\ |\g| = (k-1)-r}} i\p_x Q_{\g}(x)P_{|\g|}(p)( p\s_3-1)\s_3 \bigg](p\s_3-1).
\end{align*}
We check that the inner sums (together with the common factors of~$(p^2-1)^{-k-1}$ and~$(p\s_3-1)$) can be absorbed into~$R_{k,r+1}^d(x,p)$ and~$R_{k,r}^d(x,p)$, respectively.
\begin{itemize}
	\item When~$\g\in \N^{2r+1}$ and $|\g|=(k-1)-r$, the term~$iUQ_{\g}(x) P_{|\g|}(p)$ can be absorbed into~$R_{k,r+1}^d(x,p)$: 
	\begin{itemize}
		\item First,~$UQ_{\g} = Q_{(0,\g)}$, with~$(0,\g)\in \N^{2(r+1)}$ and~$|(0,\g)| = |\g| = k-(r+1)$.
		\item Second,~$\deg P_{|\g|}(p)\le (k-1)-r = k-(r+1)$.
	\end{itemize}   
	\item When~$\g\in \N^{2r}$ and~$|\g| = (k-1)-r$, the term~$\p_x Q_{\g}(x) P_{|\g|}(p)$ can be absorbed into~$R_{k,r}^d(x,p)$: 
	\begin{itemize}
		\item First,~$\p_x Q_{\g}$ is a sum of~$Q_{\g'}$'s, with~$|\g'|=|\g|+1 = k-r$;  
		\item Second,~$\deg P_{|\g|}(p)(p\s_3-1)\le [(k-1)-r]+1 = k-r$. 	
	\end{itemize}
\end{itemize}

The formula for~$R_k^a(x,p)$ may be verified in exactly the same way.  We write
\begin{align*}
R_k^a(x,p)
& = \frac{1}{(p^2-1)^{k+1}} \bigg[ \sum_{r=0}^{k-1} \sum_{\substack{\g\in \N^{2r+1}\\ |\g| = (k-1)-r}} U^2 Q_{\g}(x)P_{|\g|}(p) + i\p_x Q_{\g}(x) P_{|\g|}(p)\s_3 (p\s_3 + 1) \\
& \hspace{30 mm} - \sum_{r=1}^{k-1} \sum_{\substack{\eta\in \N^{2r}\\ |\eta| = (k-1)-r}} U \p_x Q_{\eta}(x) P_{|\eta|}(p) (p\s_3-1) \s_3 \bigg]. 
\end{align*}	
As above, we perform the routine verifications of the numerology as follows.
\begin{itemize}
\item When~$\g\in \N^{2r+1}$ and~$|\g| = (k-1)-r$, the term~$U^2 Q_\g(x)P_{|\g|}(x)$ can be absorbed into~$R^a_{k,r+1}(x,p)$:
\begin{itemize}
	\item First,~$U^2 Q_\g = Q_{(0,0,\g)}$, with~$(0,0,\g)\in \N^{2(r+1)+1}$,~$|(0,0,\g)| = |\g| = k-(r+1)$;
	\item Second,~$\deg P_{|\g|}(p) \le (k-1)-r = k-(r+1)$.  
\end{itemize}
\item When~$\g \in \N^{2r+1}$ and $|\g| = (k-1)-r$, the term~$i\p_x Q_\g(x)P_{|\g|}(p)\s_3(p\s_3+1)$ can be absorbed into~$R^a_{k,r}(x,p)$. When~$\eta\in \N^{2r}$ and $|\eta|=(k-1)-r$, the same is true of~$U\p_x Q_\eta(x)P_{|\eta|}(p)(p\s_3-~1)\s_3$.  
\begin{itemize}
	\item First, both~$\p_x Q_\g$ and~$U\p_x Q_\eta$ are sums of~$Q_{\g'}$'s, with~$\g'\in \N^{2r+1}$ and~$|\g'| = k-r$.  
	\item Second,~$P_{|\g|}(p)\s_3 (p\s_3 + 1)$ and~$P_{|\eta|}(p)(p\s_3-1)\s_3$ each have degree at most~$k-r$.  
\end{itemize}
\end{itemize}



\def\cprime{$'$}

\end{document}